\newtheorem{proposition}{Proposition}
\newtheorem{lemma}{Lemma}
\newtheorem{theorem}{Theorem}
\newtheorem{corollary}{Corollary}
\theoremstyle{definition}
\newtheorem{definition}{Definition}
\newtheorem{example}{Example}
\newcommand{\ZZ}{\mathbb{Z}}
\newcommand{\ZZp}{\mathbb{Z}_{>0}}
\newcommand{\ZZnn}{\mathbb{Z}_{\geq 0}}
\newcommand{\ord}{\mathsf{ord}}
\newcommand{\FF}{\mathcal{F}}
\newcommand{\MM}{\mathcal{M}}
\newcommand{\CC}{\mathcal{C}}
\newcommand{\Gen}{\mathsf{Gen}}
\newcommand{\Enc}{\mathsf{Enc}}
\newcommand{\Dec}{\mathsf{Dec}}
\newcommand{\Eval}{\mathsf{Eval}}
\newcommand{\pk}{\mathsf{pk}}
\newcommand{\sk}{\mathsf{sk}}
\newcommand{\ek}{\mathsf{ek}}
\newcommand{\PT}{\mathsf{PT}}
\begin{document}

\title{On Compression Functions over Groups with Applications to Homomorphic Encryption}
\author{Koji Nuida${}^{12}$\medskip\\
${}^1$ Institute of Mathematics for Industry (IMI), Kyushu University\\
\texttt{nuida@imi.kyushu-u.ac.jp}\\
${}^2$ National Institute of Advanced Industrial Science and Technology (AIST)}
\date{\today}

\maketitle

\begin{abstract}
Fully homomorphic encryption (FHE) enables an entity to perform arbitrary computation on encrypted data without decrypting the ciphertexts.
An ongoing group-theoretical approach to construct an FHE scheme uses a certain \lq\lq compression\rq\rq{} function $F(x)$ implemented by group operations on a given finite group $G$, which satisfies that $F(1) = 1$ and $F(\sigma) = F(\sigma^2) = \sigma$ where $\sigma \in G$ is some element of order $3$.
The previous work gave an example of such a function over the symmetric group $G = S_5$ by just a heuristic approach.
In this paper, we systematically study the possibilities of such a function over various groups.
We show that such a function does not exist over any solvable group $G$ (such as an Abelian group and a smaller symmetric group $S_n$ with $n \leq 4$).
We also construct such a function over the alternating group $G = A_5$ that has a shortest possible expression.
Moreover, by using this new function, we give a reduction of a construction of an FHE scheme to a construction of a homomorphic encryption scheme over the group $A_5$, which is more efficient than the previously known reductions.
\\
\ \\
\textbf{2010 Mathematics Subject Classification:} 20D60, 94A60 \\
\textbf{Keywords:} Functions over groups; alternating groups; fully homomorphic encryption
\end{abstract}

\section{Introduction}
\label{sec:introduction}

\subsection{Background}
\label{sec:introduction__background}

\emph{Homomorphic encryption} is a special kind of public key encryption that enables an entity to perform \lq\lq computation on encrypted data\rq\rq{}; that is, given a certain $n$-ary operation $\varphi$ on plaintexts and ciphertexts $c_1,\dots,c_n$ with corresponding (unknown) plaintexts $m_1,\dots,m_n$, an entity can generate a ciphertext for plaintext $\varphi(m_1,\dots,m_n)$ without decrypting the input ciphertexts.
In particular, \emph{fully homomorphic encryption} (FHE) \cite{Gen09} can perform such computation on ciphertexts for an arbitrary operation $\varphi$ on plaintexts.
After the first construction of an FHE scheme by Gentry in 2009 \cite{Gen09}, many FHE schemes have been proposed in the literature, e.g., \cite{Bonte+22,BraVai11,Cheon+18,Chillotti+20,Dijk+10,DucMic15,GSW13,NuiKur15}.
Currently, all the known successful constructions of FHE schemes in the literature (with plausible security) follow basically the same framework using ciphertexts with \lq\lq noise\rq\rq{}.
That is, each ciphertext involves a noise term which grows when a homomorphic operation is applied, and a \lq\lq bootstrapping\rq\rq{} procedure \cite{Gen09} is executed to cancel out the noise term before it becomes too large to ensure correct decryption.
Usually, such a bootstrapping procedure is theoretically complicated, and is computationally much more expensive than the homomorphic evaluation of operations $\varphi$ themselves.

Besides the only successful approach so far to construction of FHE schemes described above, there is another ongoing approach from group theory.
An outline of the approach is as follows: (1) encode each plaintext bit $b \in \{0,1\}$ into an element $\sigma_b$ of a suitable (finite and) \emph{non-Abelian group} $G$; (2) implement an operation $\widetilde{\varphi}$ on the group $G$ using group operations of $G$, corresponding to a plaintext operation $\varphi$ via the encoding $b \mapsto \sigma_b$ (see below); (3) construct a homomorphic encryption scheme $\Pi_0$ with plaintext space $G$; (4) by using the functionality of $\Pi_0$, homomorphically evaluate the operation $\widetilde{\varphi}$.
For example, for the step (2) with $\varphi = \mathsf{AND}$, we require a function $\widetilde{\varphi}(x,y)$ that can be computed by using group operations of $G$ and satisfies that $\widetilde{\varphi}(\sigma_0,\sigma_0) = \widetilde{\varphi}(\sigma_0,\sigma_1) = \widetilde{\varphi}(\sigma_1,\sigma_0) = \sigma_0$ and $\widetilde{\varphi}(\sigma_1,\sigma_1) = \sigma_1$ (corresponding to $\mathsf{AND}(0,0) = \mathsf{AND}(0,1) = \mathsf{AND}(1,0) = 0$ and $\mathsf{AND}(1,1) = 1$).
To the author's best knowledge, such an approach was mentioned (informally) for the first time in \cite{GriPon04}, where the step (2) is supposed to be performed based on the results of \cite{BST90}.
The same approach was mentioned again in \cite{GriPon05}.
Later, such an approach was re-discovered (and mentioned informally) in \cite{OstSke08}, where the step (2) for the $\mathsf{NAND}$ operation is based on the properties of simple groups and commutators.
Moreover, a similar approach was also described in \cite{Nui21}, where the step (2) is performed in a more concrete manner.
The aim of the present paper is to analyze the approach of \cite{Nui21} in detail.
Here we emphasize that among the four steps above, the step (3) is obviously most difficult, and no successful solution for the step (3) has been given in the literature.
We note that a candidate homomorphic encryption scheme with plaintext space being the symmetric group $S_4$ was proposed in \cite{GriPon05}.
However, besides an issue that the ciphertexts in their scheme are not compact (i.e., the ciphertext size grows unboundedly by iterative homomorphic operations), there is an essential issue that the known constructions in the step (2) require the group $G$ to be non-solvable, while $S_4$ is a solvable group.

We explain an approach to the step (2) above in \cite{Nui21} called an \lq\lq approximate-then-adjust\rq\rq{} method.
In this approach, a target function $\widetilde{\varphi}$ is constructed by composition of a multivariate \lq\lq inner function\rq\rq{} $F^{\mathrm{in}}$ followed by a univariate \lq\lq outer function\rq\rq{} $F^{\mathrm{out}}$.
For example, for the case $\varphi = \mathsf{OR}$, we take an element $\sigma \in G$ of order $3$, set $\sigma_0 := 1$ and $\sigma_1 := \sigma$, and simply set $F^{\mathrm{in}}_{\mathsf{OR}}(x_1,x_2) := x_1 x_2$.
Now the three values $F^{\mathrm{in}}_{\mathsf{OR}}(\sigma_0,\sigma_0) = \sigma_0$, $F^{\mathrm{in}}_{\mathsf{OR}}(\sigma_0,\sigma_1) = \sigma_1$, and $F^{\mathrm{in}}_{\mathsf{OR}}(\sigma_1,\sigma_0) = \sigma_1$ correctly correspond to the values of $\mathsf{OR}$, while the remaining value $F^{\mathrm{in}}_{\mathsf{OR}}(\sigma_1,\sigma_1) = \sigma^2$ is not correct.
Then an outer function $F^{\mathrm{out}}$ satisfying that $F^{\mathrm{out}}(1) = 1$ and $F^{\mathrm{out}}(\sigma) = F^{\mathrm{out}}(\sigma^2) = \sigma$ can adjust the incorrect value (i.e., $F^{\mathrm{out}}( F^{\mathrm{in}}_{\mathsf{OR}}(\sigma_1,\sigma_1) ) = F^{\mathrm{out}}(\sigma^2) = \sigma_1$) while keeping the other correct values.
The same outer function $F^{\mathrm{out}}$ can be also used to realize some other operations; e.g., $\mathsf{NAND}$ with $F^{\mathrm{in}}_{\mathsf{NAND}}(x_1,x_2) = x_1{}^{-1} x_2{}^{-1} \sigma^2$ and $\mathsf{XOR}$ with $F^{\mathrm{in}}_{\mathsf{XOR}}(x_1,x_2) = x_1{}^{-1} x_2$.
Hence the problem is reduced to construct such a function $F^{\mathrm{out}}$ satisfying that $F^{\mathrm{out}}(1) = 1$ and $F^{\mathrm{out}}(\sigma) = F^{\mathrm{out}}(\sigma^2) = \sigma$.
In \cite{Nui21}, by setting $G$ to be the symmetric group $S_5$ and $\sigma = (1\ 2\ 3) \in S_5$, the following example of the function $F^{\mathrm{out}}$ was given:
\begin{equation}
\label{eq:known_construction}
F^{\mathrm{out}}(y) = (1\ 5)(2\ 3\ 4) \cdot y \cdot (2\ 3\ 4) \cdot y \cdot (3\ 4) \cdot y^2 \cdot (2\ 3)(4\ 5) \cdot y \cdot (2\ 3\ 4) \cdot y \cdot (3\ 4) \cdot y^2 \cdot (1\ 4\ 2\ 5) \enspace.
\end{equation}
However, this function was found by a heuristic argument, and no systematic approach to find such a function was given in \cite{Nui21}.
The aim of the present paper is to execute a systematic study for possibilities of such functions, possibly over smaller groups and having shorter expressions than \eqref{eq:known_construction}.

\subsection{Our Contributions}
\label{sec:introduction__contribution}

In this paper, we systematically study the possibilities of functions $F$ in some classes implemented on various groups $G$, including those satisfying the \lq\lq main condition\rq\rq{} that $F(1) = 1$ and $F(\sigma) = F(\sigma^2) = \sigma$ where $\sigma$ is some element of $G$ of order $3$.
One of our main results is that such a function with a certain property, including the case of the main condition, does not exist when $G$ is a solvable group (Theorem \ref{thm:solvable_group}).
As a consequence, the approach of \cite{Nui21} described in Section \ref{sec:introduction__background} cannot work if $G$ is a solvable group (such as an Abelian group and $S_4$).
Secondly, we show that if a function $F$ satisfying the main condition exists over $G = S_5$, then such a function also exists over the alternating group $G = A_5$ (Proposition \ref{prop:construction_in_S_5_is_reduced_to_A_5}).
Then, based on some other results, we performed a computer search for a function $F$ satisfying the main condition over $A_5$ and found such a function (Example \ref{exmp:over_A_5}).
This function is significantly simpler than \eqref{eq:known_construction}, and it is also shown (by Theorem \ref{thm:short_case_over_S_5}) that this function is with a shortest possible expression.
Moreover, by using this new function, we give a reduction of a construction of an FHE scheme to a construction of a homomorphic encryption scheme over the group $A_5$, which is more efficient than the previously known reductions in \cite{GriPon04,GriPon05,Nui21,OstSke08} (Theorem \ref{thm:FHE_from_A_5-HE}).

\section{Definitions and Basic Observations}
\label{sec:basic}

In this paper, we let $G$ be a finite group with unit element denoted by $1$.
Let $\ZZp$ and $\ZZnn$ denote the sets of positive integers and non-negative integers, respectively.
For any $n \in \ZZ$ and $k \in \ZZp$, let $n \bmod k$ denote the remainder of $n$ modulo $k$, taken from the interval $[0,k-1]$. 
For an element $g$ of a group, let $\ord(g)$ denote the order of $g$.
Let $x$ and $y_1,y_2,\dots$ denote variables not belonging to the group under consideration.
We also use the following terminology.

\begin{definition}
\label{defn:group_function}
We define a \emph{group function} over $G$ to be a sequence over $G \sqcup \{x\}$ of the form
\[
F(x) = g_0 x^{e_1} g_1 x^{e_2} \cdots g_{\ell-1} x^{e_{\ell}} g_{\ell}
\]
where $\ell \in \ZZp$, $g_i \in G$, $e_i \in \ZZp$, and $x^{e_i}$ is an abbreviation of $xx \cdots x$ ($e_i$ letters).
We call $\ell$ the \emph{size} of $F$ and call $(e_1,\dots,e_{\ell})$ the \emph{exponent} of $F$.
Moreover, we define the substitution of $h \in G$ into $F$ to be
\[
F(h) := g_0 h^{e_1} g_1 h^{e_2} \cdots g_{\ell-1} h^{e_{\ell}} g_{\ell} \in G \enspace.
\]
\end{definition}

The following is a main object of this paper.

\begin{definition}
\label{defn:compression_function}
Let $\sigma \in G \setminus \{1\}$ and $L \in \ZZp$, and let $\mu_j \in \ZZnn$ and $\rho_j \in G$ for each $j = 1,\dots,L$.
We define a \emph{compression function} of \emph{type $(\sigma; (\mu_1,\rho_1),\dots, (\mu_L,\rho_L))$} to be a group function $F$ satisfying that
\[
F(\sigma^{\mu_j}) = \rho_j \mbox{ for every } j = 1,\dots,L \enspace.
\]
\end{definition}

\begin{example}
\label{exmp:over_S_5}
Let $G$ be the symmetric group $S_5$ on five letters, and let $\sigma \in G$ be the cyclic permutation $(1\ 2\ 3)$.
It was found in \cite{Nui21} that the following group function of size $6$ and exponent $(1,1,2,1,1,2)$ over $G$,
\[
F(x) = (1\ 5)(2\ 3\ 4) \cdot x \cdot (2\ 3\ 4) \cdot x \cdot (3\ 4) \cdot x^2 \cdot (2\ 3)(4\ 5) \cdot x \cdot (2\ 3\ 4) \cdot x \cdot (3\ 4) \cdot x^2 \cdot (1\ 4\ 2\ 5)
\]
satisfies that $F(1) = 1$ and $F(\sigma) = F(\sigma^2) = \sigma$, therefore $F$ is a compression function over $G$ of type $(\sigma; (0,1), (1,\sigma), (2,\sigma))$.
In this example, the term \lq\lq compression function\rq\rq{} is motivated by the situation that the two-element set $\{\sigma,\sigma^2\}$ is compressed by the function $F$ to a single element $\sigma$. 
\end{example}

\begin{example}
\label{exmp:over_A_5}
We give another example smaller than Example \ref{exmp:over_S_5} as follows.
Let $G$ be the alternating group $A_5$ on five letters, and let $\sigma = (1\ 2\ 3) \in G$.
Then the following group function of size $4$ and exponent $(1,1,1,1)$ over $G$,
\[
F(x) = (1\ 2\ 4\ 3\ 5) \cdot x \cdot (1\ 3\ 5) \cdot x \cdot (1\ 4\ 3) \cdot x \cdot (15)(23) \cdot x \cdot (1\ 4\ 3\ 5\ 2)
\]
satisfies that $F(1) = 1$ and $F(\sigma) = F(\sigma^2) = \sigma$, therefore $F$ is also a compression function over $G$ of the same type $(\sigma; (0,1), (1,\sigma), (2,\sigma))$.
\end{example}

\begin{example}
\label{exmp:over_S_4}
We give examples of different types.
Let $G = S_4$, $\sigma = (1\ 2\ 3\ 4) \in G$, $\rho_1 = (2\ 4)$, $\rho_2 = (1\ 3\ 2\ 4)$, $\rho_3 = (1\ 2\ 3\ 4)$, $\rho_4 = (1\ 2)$.
Then the following group function of size $2$ and exponent $(1,1)$,
\[
F(x) = x \cdot (3\ 4) \cdot x \cdot (2\ 3\ 4)
\]
satisfies that $F(\sigma^{j-1}) = \rho_j$ for each $j \in \{1,2,3,4\}$, therefore $F$ is a compression function over $G$ of type $(\sigma; (0,\rho_1),(1,\rho_2),(2,\rho_3),(3,\rho_4))$.
(In this example, we still call the $F$ a compression function though it does not \lq\lq compress\rq\rq{} anything.)
Moreover, this implies that the following group function of size $4$ and exponent $(1,1,1,1)$,
\[
\widetilde{F}(x) = F(x)^2
= x \cdot (3\ 4) \cdot x \cdot (2\ 3\ 4) \cdot x \cdot (3\ 4) \cdot x \cdot (2\ 3\ 4)
\]
is a compression function of type $(\sigma; (0,\widetilde{\rho}_1),(1,\widetilde{\rho}_2),(2,\widetilde{\rho}_3),(3,\widetilde{\rho}_4))$ where $\widetilde{\rho}_j := \rho_j{}^2$.
Here we have $\widetilde{\rho}_1 = \widetilde{\rho}_4 = 1$, but $\widetilde{\rho}_2 = (1\ 2)(3\ 4)$ is not equal to $\widetilde{\rho}_3 = (1\ 3)(2\ 4)$.
In fact, it holds (by Theorem \ref{thm:solvable_group} below) that such a compression function over $G = S_4$ cannot exist if we moreover set $\widetilde{\rho}_2 = \widetilde{\rho}_3 \neq 1$.
\end{example}

In order to investigate (in)existence of compression functions, the following lemma is fundamental.

\begin{lemma}
\label{lem:restating_with_conjugate_elements_general}
Let $\sigma \in G \setminus \{1\}$ and $L \in \ZZp$, and let $\mu_j \in \ZZnn$ and $\rho_j \in G$ for $j = 1,\dots,L$.
Let $\ell \in \ZZp$ and $e_i \in \ZZp$ for $i = 1,\dots,\ell$.
Then the following conditions are equivalent:
\begin{enumerate}
\item \label{item:lem:restating_with_conjugate_elements_general__1}
There exists a compression function $F$ of type $(\sigma;(\mu_1,\rho_1),\dots,(\mu_L,\rho_L))$, size $\ell$, and exponent $(e_1,\dots,e_{\ell})$ over $G$.
\item \label{item:lem:restating_with_conjugate_elements_general__2}
The following system of equations over $G$,
\begin{equation}
\label{eq:lem:restating_with_conjugate_elements_general:equation}
y_1{}^{\mu_j e_1} y_2{}^{\mu_j e_2} \cdots y_{\ell}{}^{\mu_j e_{\ell}} y_{\ell+1} = \rho_j \mbox{ for } j = 1,\dots,L
\end{equation}
has a solution $(\tau_1,\dots,\tau_{\ell},\tau_{\ell+1}) \in G^{\ell+1}$ satisfying the following \emph{conjugacy condition}:
\begin{equation}
\label{eq:lem:restating_with_conjugate_elements_general:conjugacy}
\mbox{For each $i \in \{1,\dots,\ell\}$, $\tau_i$ is conjugate to $\sigma$ in $G$} \enspace.
\end{equation}
\end{enumerate}
\end{lemma}
\begin{proof}
First, we assume Condition \ref{item:lem:restating_with_conjugate_elements_general__1} and show that Condition \ref{item:lem:restating_with_conjugate_elements_general__2} holds.
Write
\[
F(x) = g_0 x^{e_1} g_1 x^{e_2} \cdots g_{\ell-1} x^{e_{\ell}} g_{\ell}
\]
with $g_0,\dots,g_{\ell} \in G$.
By putting $h_i := g_0 g_1 \cdots g_i$ for each $i = 0,1,\dots,\ell$, we have $g_i = h_{i-1}{}^{-1} h_i$ for any $i = 1,\dots,\ell$, therefore
\[
F(\nu) = h_0 \nu^{e_1} h_0{}^{-1} h_1 \nu^{e_2} h_1{}^{-1} \cdots h_{\ell-1} \nu^{e_{\ell}} h_{\ell-1}{}^{-1} \cdot h_{\ell} \mbox{ for any } \nu \in G \enspace.
\]
Moreover, for each $i = 1,\dots,\ell$, let $\tau_i := h_{i-1} \sigma h_{i-1}{}^{-1}$, which is conjugate to $\sigma$ in $G$.
Then for each $j = 1,\dots,L$, the condition $F(\sigma^{\mu_j}) = \rho_j$ in Definition \ref{defn:compression_function} implies that $\tau_1{}^{\mu_j e_1} \tau_2{}^{\mu_j e_2} \cdots \tau_{\ell}{}^{\mu_j e_{\ell}} h_{\ell} = \rho_j$.
Hence $(\tau_1,\dots,\tau_{\ell},h_{\ell})$ is a solution of the system of equations \eqref{eq:lem:restating_with_conjugate_elements_general:equation} satisfying the conjugacy condition \eqref{eq:lem:restating_with_conjugate_elements_general:conjugacy}, therefore Condition \ref{item:lem:restating_with_conjugate_elements_general__2} holds.

Conversely, we assume Condition \ref{item:lem:restating_with_conjugate_elements_general__2} and show that Condition \ref{item:lem:restating_with_conjugate_elements_general__1} holds.
As each $\tau_i$ for $i = 1,\dots,\ell$ is conjugate to $\sigma$ in $G$, we can write $\tau_i = h_{i-1} \sigma h_{i-1}{}^{-1}$ for some $h_{i-1} \in G$.
Now for each $j = 1,\dots,L$, the condition $\tau_1{}^{\mu_j e_1} \cdots \tau_{\ell}{}^{\mu_j e_{\ell}} \tau_{\ell+1} = \rho_j$ for the solution $(\tau_1,\dots,\tau_{\ell},\tau_{\ell+1})$ implies that
\[
h_0 (\sigma^{\mu_j})^{e_1} h_0{}^{-1} h_1 (\sigma^{\mu_j})^{e_2} h_1{}^{-1} \cdots h_{\ell-1} (\sigma^{\mu_j})^{e_{\ell}} h_{\ell-1}{}^{-1} \tau_{\ell+1} = \rho_j \enspace.
\]
Therefore, the group function
\[
F(x) := h_0 x^{e_1} (h_0{}^{-1} h_1) x^{e_2} (h_1{}^{-1} h_2) \cdots (h_{\ell-2}{}^{-1} h_{\ell-1}) x^{e_{\ell}} (h_{\ell-1}{}^{-1} \tau_{\ell+1})
\]
satisfies that $F(\sigma^{\mu_j}) = \rho_j$ for every $j = 1,\dots,L$.
Hence Condition \ref{item:lem:restating_with_conjugate_elements_general__1} holds.
This completes the proof.
\end{proof}

For this lemma, the equivalent condition can be slightly simplified when the type of a compression function is \lq\lq normalized\rq\rq{} by the condition $F(1) = 1$.
To state the result, we prepare the following terminology.

\begin{definition}
\label{defn:normalized_type}
Let $(\sigma; (\mu_1,\rho_1),\dots,(\mu_L,\rho_L))$ be a type of a compression function over $G$; that is, $\sigma \in G \setminus \{1\}$, $L \in \ZZp$, and for each $j = 1,\dots,L$, we have $\mu_j \in \ZZnn$ and $\rho_j \in G$.
We say that the type is \emph{normalized} if $\mu_1 = 0$ and $\rho_1 = 1 \in G$.
\end{definition}

\begin{lemma}
\label{lem:restating_with_conjugate_elements}
Let $(\sigma; (\mu_1,\rho_1),\dots,(\mu_L,\rho_L))$ be a normalized type of a compression function over $G$.
Let $\ell \in \ZZp$ and $e_i \in \ZZp$ for $i = 1,\dots,\ell$.
Then the following conditions are equivalent:
\begin{enumerate}
\item \label{item:lem:restating_with_conjugate_elements__1}
There exists a compression function $F$ of type $(\sigma; (\mu_1,\rho_1),\dots, (\mu_L,\rho_L))$, size $\ell$, and exponent $(e_1,\dots,e_{\ell})$ over $G$.
\item \label{item:lem:restating_with_conjugate_elements__2}
The following system of equations over $G$,
\begin{equation}
\label{eq:lem:restating_with_conjugate_elements:equation}
y_1{}^{\mu_j e_1} y_2{}^{\mu_j e_2} \cdots y_{\ell}{}^{\mu_j e_{\ell}} = \rho_j \mbox{ for } j = 2,\dots,L
\end{equation}
has a solution $(\tau_1,\dots,\tau_{\ell}) \in G^{\ell}$ satisfying the following \emph{conjugacy condition}:
\begin{equation}
\label{eq:lem:restating_with_conjugate_elements:conjugacy}
\mbox{For each $i \in \{1,\dots,\ell\}$, $\tau_i$ is conjugate to $\sigma$ in $G$} \enspace.
\end{equation}
\end{enumerate}
\end{lemma}
\begin{proof}
In Condition \ref{item:lem:restating_with_conjugate_elements_general__2} of Lemma \ref{lem:restating_with_conjugate_elements_general}, the \lq\lq normalized\rq\rq{} property $(\mu_1,\rho_1) = (0,1)$ implies that the equation \eqref{eq:lem:restating_with_conjugate_elements_general:equation} for $j = 1$ is satisfied if and only if $y_{\ell+1} = 1$.
Therefore the claim follows immediately from Lemma \ref{lem:restating_with_conjugate_elements_general}.
\end{proof}

\begin{example}
\label{exmp:over_A_5_equivalent}
Let $G = A_5$ and $\sigma = (1\ 2\ 3)$.
Put
\[
\tau_1 := (2\ 4\ 5) \,,\,
\tau_2 := (1\ 5\ 4) \,,\,
\tau_3 := (3\ 4\ 5) \,,\,
\tau_4 := (2\ 5\ 4) \enspace.
\]
Then we have
\[
\tau_1 \tau_2 \tau_3 \tau_4 = (1\ 2\ 3) = \sigma \,,\,
\tau_1{}^2 \tau_2{}^2 \tau_3{}^2 \tau_4{}^2 = (1\ 2\ 3) = \sigma \enspace.
\]
Therefore $(\tau_1,\tau_2,\tau_3,\tau_4) \in G^4$ is a solution of the system of equations
\[
\begin{cases}
y_1 y_2 y_3 y_4 = \sigma \enspace,\\
y_1{}^2 y_2{}^2 y_3{}^2 y_4{}^2 = \sigma \enspace.
\end{cases}
\]
Moreover, we have
\[
\begin{split}
\tau_1 &= (1\ 2\ 4\ 3\ 5) \sigma (1\ 2\ 4\ 3\ 5)^{-1} \,,\,
\tau_2 = (1\ 5\ 2\ 4\ 3) \sigma (1\ 5\ 2\ 4\ 3)^{-1} \enspace,\\
\tau_3 &= (1\ 3\ 5\ 2\ 4) \sigma (1\ 3\ 5\ 2\ 4)^{-1} \,,\,
\tau_4 = (1\ 2\ 5\ 3\ 4) \sigma (1\ 2\ 5\ 3\ 4)^{-1} \enspace.
\end{split}
\]
Therefore the solution $(\tau_1,\tau_2,\tau_3,\tau_4)$ satisfies the conjugacy condition \eqref{eq:lem:restating_with_conjugate_elements:conjugacy}.
Hence by Lemma \ref{lem:restating_with_conjugate_elements}, there exists a compression function $F$ of type $(\sigma; (0,1),(1,\sigma),(2,\sigma))$, size $4$, and exponent $(1,1,1,1)$ over $G$.
Now by following the proof of Lemma \ref{lem:restating_with_conjugate_elements}, we put
\[
\begin{split}
g_0 &:= (1\ 2\ 4\ 3\ 5) \,,\,
g_1 := (1\ 2\ 4\ 3\ 5)^{-1} (1\ 5\ 2\ 4\ 3)
= (1\ 3\ 5) \,,\,
g_2 := (1\ 5\ 2\ 4\ 3)^{-1} (1\ 3\ 5\ 2\ 4)
= (1\ 4\ 3) \enspace,\\
g_3 &:= (1\ 3\ 5\ 2\ 4)^{-1} (1\ 2\ 5\ 3\ 4)
= (1\ 5)(2\ 3) \,,\,
g_4 := (1\ 2\ 5\ 3\ 4)^{-1}
= (1\ 4\ 3\ 5\ 2) \enspace.
\end{split}
\]
Then the function $F$ is given by
\[
F(x) = g_0 x g_1 x g_2 x g_3 x g_4 \enspace,
\]
which is exactly the function given in Example \ref{exmp:over_A_5}.
\end{example}

\section{On Solutions with Commutativity Properties}
\label{sec:commutativity}

In this section, we show that under a certain condition, a compression function does not exist in some \lq\lq commutative\rq\rq{} cases.
The following is a key lemma of the argument.

\begin{lemma}
\label{lem:commutative_solutions}
Let $(\sigma; (\mu_1,\rho_1),\dots,(\mu_L,\rho_L))$ be a normalized type of a compression function over $G$ satisfying the following condition:
\begin{equation}
\label{eq:lem:commutative_solutions:type_condition}
\mbox{For some indices $j_1 \neq j_2$ with $\rho_{j_1} = \rho_{j_2}$, the value $\mu_{j_1} - \mu_{j_2}$ is not a multiple of $\ord(\rho_{j_1})$} \enspace.
\end{equation}
Then the system of equations \eqref{eq:lem:restating_with_conjugate_elements:equation} has no solution $(\tau_1,\dots,\tau_{\ell}) \in G^{\ell}$ satisfying that all $\tau_1,\dots,\tau_{\ell}$ commute with each other.
\end{lemma}
\begin{proof}
Assume for the contrary that such a solution $(\tau_1,\dots,\tau_{\ell})$ exists.
Then the commutativity property of $\tau_i$'s implies that $\tau_1{}^{\mu_j e_1} \cdots \tau_{\ell}{}^{\mu_j e_{\ell}} = \widetilde{\tau}{}^{\mu_j}$ for each $j$ where $\widetilde{\tau} := \tau_1{}^{e_1} \cdots \tau_{\ell}{}^{e_{\ell}}$, therefore we have $\widetilde{\tau}{}^{\mu_{j_1}} = \rho_{j_1} = \widetilde{\tau}{}^{\mu_{j_2}}$.
This implies that $\rho_{j_1}{}^{\mu_{j_1}} = \widetilde{\tau}{}^{\mu_{j_1} \mu_{j_2}} = \rho_{j_1}{}^{\mu_{j_2}}$ and therefore $\rho_{j_1}{}^{\mu_{j_1} - \mu_{j_2}} = 1$.
This contradicts the condition that $\mu_{j_1} - \mu_{j_2}$ is not a multiple of $\ord(\rho_{j_1})$.
Hence the claim holds. 
\end{proof}

A typical situation for condition \eqref{eq:lem:commutative_solutions:type_condition} is that those indices $j_1 \neq j_2$ with $\rho_{j_1} = \rho_{j_2}$ satisfy that $\rho_{j_1} \neq 1$ and $\mu_{j_1} = \mu_{j_2} \pm 1$.
For example, the normalized type $(\sigma; (0,1), (1,\sigma), (2,\sigma))$ in Examples \ref{exmp:over_S_5} and \ref{exmp:over_A_5} satisfies condition \eqref{eq:lem:commutative_solutions:type_condition}.

We have the following consequences of Lemma \ref{lem:commutative_solutions}.

\begin{proposition}
\label{prop:type_1_size_one}
Let $(\sigma; (\mu_1,\rho_1),\dots,(\mu_L,\rho_L))$ be a normalized type of a compression function satisfying condition \eqref{eq:lem:commutative_solutions:type_condition}.
Then there does not exist a compression function $F$ of type $(\sigma; (\mu_1,\rho_1),\dots,(\mu_L,\rho_L))$ and size $1$ over $G$.
\end{proposition}
\begin{proof}
As the size of $F$ is $\ell = 1$, the commutativity condition for a solution in Lemma \ref{lem:commutative_solutions} is automatically satisfied.
Therefore by Lemma \ref{lem:commutative_solutions}, the system of equations \eqref{eq:lem:restating_with_conjugate_elements:equation} in Lemma \ref{lem:restating_with_conjugate_elements} has no solution.
Hence the claim follows from Lemma \ref{lem:restating_with_conjugate_elements}.
\end{proof}

\begin{proposition}
\label{prop:type_1_Abelian}
Let $(\sigma; (\mu_1,\rho_1),\dots,(\mu_L,\rho_L))$ be a normalized type of a compression function satisfying condition \eqref{eq:lem:commutative_solutions:type_condition}.
Suppose moreover that all elements of $G$ conjugate to $\sigma$ commute with each other.
Then there does not exist a compression function $F$ of type $(\sigma; (\mu_1,\rho_1),\dots,(\mu_L,\rho_L))$ over $G$.
In particular, if $G$ is Abelian, then there does not exist a compression function $F$ of type $(\sigma; (\mu_1,\rho_1),\dots,(\mu_L,\rho_L))$ over $G$.
\end{proposition}
\begin{proof}
By the assumption that all elements of $G$ conjugate to $\sigma$ commute with each other, the conjugacy condition \eqref{eq:lem:restating_with_conjugate_elements:conjugacy} in Lemma \ref{lem:restating_with_conjugate_elements} implies the commutativity condition in Lemma \ref{lem:commutative_solutions}.
Therefore by Lemma \ref{lem:commutative_solutions}, the system of equations \eqref{eq:lem:restating_with_conjugate_elements:equation} in Lemma \ref{lem:restating_with_conjugate_elements} has no solution satisfying the conjugacy condition \eqref{eq:lem:restating_with_conjugate_elements:conjugacy}.
Hence the claim follows from Lemma \ref{lem:restating_with_conjugate_elements}.
\end{proof}

We also have a result similar to Lemma \ref{lem:commutative_solutions} as follows.

\begin{lemma}
\label{lem:partly_commutative_solutions}
Let $(\sigma; (\mu_1,\rho_1),\dots,(\mu_L,\rho_L))$ be a normalized type of a compression function over $G$ satisfying the following condition:
\begin{equation}
\label{eq:lem:partly_commutative_solutions:type_condition}
\mbox{For some indices $j_1 \neq j_2$ with $\rho_{j_1} = \rho_{j_2} \neq 1$, the value $\mu_{j_1} - \mu_{j_2}$ divides $\mu_{j_1}$} \enspace.
\end{equation}
Then the system of equations \eqref{eq:lem:restating_with_conjugate_elements:equation} has no solution $(\tau_1,\dots,\tau_{\ell}) \in G^{\ell}$ satisfying either that $\tau_1,\dots,\tau_{\ell-1}$ commute with each other, or that $\tau_2,\dots,\tau_{\ell}$ commute with each other.
\end{lemma}
\begin{proof}
Assume for the contrary that such a solution $(\tau_1,\dots,\tau_{\ell})$ exists.
Put
\[
\begin{cases}
\widetilde{\tau} := \tau_1{}^{e_1} \cdots \tau_{\ell-1}{}^{e_{\ell-1}} \mbox{ and } \widehat{\tau} := \tau_{\ell}{}^{e_{\ell}} & \mbox{if $\tau_1,\dots,\tau_{\ell-1}$ commute} \enspace,\\
\widetilde{\tau} := \tau_1{}^{e_1} \mbox{ and } \widehat{\tau} := \tau_2{}^{e_2} \cdots \tau_{\ell}{}^{e_{\ell}} & \mbox{if $\tau_2,\dots,\tau_{\ell}$ commute} \enspace.
\end{cases}
\]
Then we have $\tau_1{}^{\mu_j e_1} \cdots \tau_{\ell}{}^{\mu_j e_{\ell}} = \widetilde{\tau}{}^{\mu_j} \widehat{\tau}{}^{\mu_j}$ for each $j$, therefore $\widetilde{\tau}{}^{\mu_{j_1}} \widehat{\tau}{}^{\mu_{j_1}} = \rho_{j_1} = \widetilde{\tau}{}^{\mu_{j_2}} \widehat{\tau}{}^{\mu_{j_2}}$.
This implies that $\widetilde{\tau}{}^{\mu_{j_1} - \mu_{j_2}} = \widehat{\tau}{}^{\mu_{j_2} - \mu_{j_1}}$.
By taking a $c \in \ZZ$ with $\mu_{j_1} = c (\mu_{j_1} - \mu_{j_2})$, it follows that $\widetilde{\tau}{}^{c(\mu_{j_1} - \mu_{j_2})} = \widehat{\tau}{}^{c(\mu_{j_2} - \mu_{j_1})}$, that is, $\widetilde{\tau}{}^{\mu_{j_1}} = \widehat{\tau}{}^{-\mu_{j_1}}$.
Therefore we have $\rho_{j_1} = \widetilde{\tau}{}^{\mu_{j_1}} \widehat{\tau}{}^{\mu_{j_1}} = 1$, contradicting the assumption that $\rho_{j_1} \neq 1$.
Hence the claim holds.
\end{proof}

We have the following consequences of Lemma \ref{lem:partly_commutative_solutions}.

\begin{proposition}
\label{prop:type_2_size_one}
Let $(\sigma; (\mu_1,\rho_1),\dots,(\mu_L,\rho_L))$ be a normalized type of a compression function satisfying condition \eqref{eq:lem:partly_commutative_solutions:type_condition}.
Then there does not exist a compression function $F$ of type $(\sigma; (\mu_1,\rho_1),\dots,(\mu_L,\rho_L))$ and size at most $2$ over $G$.
\end{proposition}
\begin{proof}
As the size of $F$ is $\ell \leq 2$, the commutativity condition for a solution in Lemma \ref{lem:partly_commutative_solutions} is automatically satisfied.
Therefore by Lemma \ref{lem:partly_commutative_solutions}, the system of equations \eqref{eq:lem:restating_with_conjugate_elements:equation} in Lemma \ref{lem:restating_with_conjugate_elements} has no solution.
Hence the claim follows from Lemma \ref{lem:restating_with_conjugate_elements}.
\end{proof}

\begin{proposition}
\label{prop:type_2_Abelian}
Let $(\sigma; (\mu_1,\rho_1),\dots,(\mu_L,\rho_L))$ be a normalized type of a compression function satisfying condition \eqref{eq:lem:partly_commutative_solutions:type_condition}.
Suppose moreover that all elements of $G$ conjugate to $\sigma$ commute with each other.
Then there does not exist a compression function $F$ of type $(\sigma; (\mu_1,\rho_1),\dots,(\mu_L,\rho_L))$ over $G$.
In particular, if $G$ is Abelian, then there does not exist a compression function $F$ of type $(\sigma; (\mu_1,\rho_1),\dots,(\mu_L,\rho_L))$ over $G$.
\end{proposition}
\begin{proof}
By the assumption that all elements of $G$ conjugate to $\sigma$ commute with each other, the conjugacy condition \eqref{eq:lem:restating_with_conjugate_elements:conjugacy} in Lemma \ref{lem:restating_with_conjugate_elements} implies the commutativity condition in Lemma \ref{lem:partly_commutative_solutions}.
Therefore by Lemma \ref{lem:partly_commutative_solutions}, the system of equations \eqref{eq:lem:restating_with_conjugate_elements:equation} in Lemma \ref{lem:restating_with_conjugate_elements} has no solution satisfying the conjugacy condition \eqref{eq:lem:restating_with_conjugate_elements:conjugacy}.
Hence the claim follows from Lemma \ref{lem:restating_with_conjugate_elements}.
\end{proof}

\section{On Normal Subgroups and Quotients}
\label{sec:normal_subgroups}

In this section, we investigate some reductions of the search for compression functions to smaller cases of normal subgroups and quotient groups.
We start with the following easy lemma.

\begin{lemma}
\label{lem:reduction_by_homomorphism}
Let $G'$ be a finite group and $\varphi \colon G \to G'$ a group homomorphism.
Let $F(x) = g_0 x^{e_1} \cdots g_{\ell-1} x^{e_{\ell}} g_{\ell}$ be a compression function of type $(\sigma; (\mu_1,\rho_1),\dots,(\mu_L,\rho_L))$ over $G$, and suppose that $\rho(\sigma) \neq 1$.
Then $\overline{F}(x) := \varphi(g_0) x^{e_1} \cdots \varphi(g_{\ell-1}) x^{e_{\ell}} \varphi(g_{\ell})$ is a compression function of type $(\varphi(\sigma); (\mu_1,\varphi(\rho_1)),\dots,(\mu_L,\varphi(\rho_L)))$ over $G'$.
\end{lemma}

Then we have the following consequences of Lemma \ref{lem:reduction_by_homomorphism} about reductions to the cases of quotient groups.

\begin{corollary}
\label{cor:over_direct_product}
Suppose that $G$ is a direct product of groups $G = H_1 \times H_2$.
For each $k = 1,2$, let $\pi_k \colon G \to H_k$ be the natural projection.
If there exists a compression function of type $(\sigma,(\mu_1,\rho_1),\dots,(\mu_L,\rho_L))$, size $\ell$, and exponent $(e_1,\dots,e_{\ell})$ over $G$, then for some $k \in \{1,2\}$, there exists a compression function of type $(\pi_k(\sigma),(\mu_1,\pi_k(\rho_1)),\dots,(\mu_L,\pi_k(\rho_L)))$, size $\ell$, and exponent $(e_1,\dots,e_{\ell})$ over $H_k$.
\end{corollary}
\begin{proof}
This follows from Lemma \ref{lem:reduction_by_homomorphism} and the fact that $\pi_k(\sigma) \neq 1$ for some $k \in \{1,2\}$ since $\sigma \neq 1$.
\end{proof}

\begin{corollary}
\label{cor:reduction_to_quotient}
Let $N$ be a normal subgroup of $G$.
Let $\pi \colon G \to G/N$ be the natural projection.
If there exists a compression function of type $(\sigma,(\mu_1,\rho_1),\dots,(\mu_L,\rho_L))$, size $\ell$, and exponent $(e_1,\dots,e_{\ell})$ over $G$, and $|N| \not\equiv 0 \pmod{\ord(\sigma)}$, then there exists a compression function of type $(\pi(\sigma),(\mu_1,\pi(\rho_1)),\dots,(\mu_L,\pi(\rho_L)))$, size $\ell$, and exponent $(e_1,\dots,e_{\ell})$ over $G/N$.
\end{corollary}
\begin{proof}
We have $\sigma \not\in N$ by the assumption that $|N| \not\equiv 0 \pmod{\ord(\sigma)}$.
Hence the claim follows from Lemma \ref{lem:reduction_by_homomorphism}.
\end{proof}

We also have the following lemma about reductions to the cases of quotient groups.

\begin{lemma}
\label{lem:with_normal_having_condition_C}
Let $(\sigma; (\mu_1,\rho_1),\dots,(\mu_L,\rho_L))$ be a normalized type of a compression function over $G$ satisfying condition \eqref{eq:lem:commutative_solutions:type_condition} or condition \eqref{eq:lem:partly_commutative_solutions:type_condition}.
Let $N$ be a normal subgroup of $G$ satisfying that all elements of $N$ having the same order as $\sigma$ commute with each other.
Let $\pi \colon G \to G/N$ be the natural projection.
If there exists a compression function $F$ of type $(\sigma; (\mu_1,\rho_1),\dots,(\mu_L,\rho_L))$, size $\ell$, and exponent $(e_1,\dots,e_{\ell})$ over $G$, then there exists a compression function of type $(\pi(\sigma); (\mu_1,\pi(\rho_1)),\dots,(\mu_L,\pi(\rho_L)))$, size $\ell$, and exponent $(e_1,\dots,e_{\ell})$ over $G/N$.
\end{lemma}
\begin{proof}
If $\sigma \not\in N$, then Lemma \ref{lem:reduction_by_homomorphism} applied to the natural projection $\pi$ implies the claim.
Therefore it suffices to deduce a contradiction by assuming that $\sigma \in N$.
Let $(\tau_1,\dots,\tau_{\ell})$ be a solution of the system of equations \eqref{eq:lem:restating_with_conjugate_elements:equation} as in Lemma \ref{lem:restating_with_conjugate_elements} implied by the existence of $F$.
As each $\tau_i$ is conjugate to $\sigma \in N$ and $N \trianglelefteq G$, we have $\tau_i \in N$ and $\ord(\tau_i) = \ord(\sigma)$ for every $i$.
Hence we have $\rho_j \in N$ for every $j$, therefore $(\sigma; (\mu_1,\rho_1),\dots,(\mu_L,\rho_L))$ is a normalized type of a compression function over $N$ satisfying condition \eqref{eq:lem:commutative_solutions:type_condition} or condition \eqref{eq:lem:partly_commutative_solutions:type_condition}.
Moreover, by the assumption on $N$, $(\tau_1,\dots,\tau_{\ell})$ is a solution of the system of equations \eqref{eq:lem:restating_with_conjugate_elements:equation} over $N$ satisfying that all $\tau_1,\dots,\tau_{\ell}$ commute with each other.
This contradicts Lemma \ref{lem:commutative_solutions} and Lemma \ref{lem:partly_commutative_solutions}.
Hence the claim holds.
\end{proof}

Then we have the following consequence of Lemma \ref{lem:with_normal_having_condition_C} about reductions to the cases of quotient groups.

\begin{corollary}
\label{cor:center_in_the_normal_subgroup}
Let $(\sigma; (\mu_1,\rho_1),\dots,(\mu_L,\rho_L))$ be a normalized type of a compression function over $G$ satisfying condition \eqref{eq:lem:commutative_solutions:type_condition} or condition \eqref{eq:lem:partly_commutative_solutions:type_condition}.
Let $N$ be a normal subgroup of $G$ satisfying that the center $Z(N)$ of $N$ is non-trivial.
Then we have $Z(N) \trianglelefteq G$, and if there exists a compression function $F$ of type $(\sigma; (\mu_1,\rho_1),\dots,(\mu_L,\rho_L))$, size $\ell$, and exponent $(e_1,\dots,e_{\ell})$ over $G$, then there exists a compression function of type $(\pi(\sigma); (\mu_1,\pi(\rho_1)),\dots,(\mu_L,\pi(\rho_L)))$, size $\ell$, and exponent $(e_1,\dots,e_{\ell})$ over $G/Z(N)$ where $\pi \colon G \to G/Z(N)$ is the natural projection.
\end{corollary}
\begin{proof}
First, as $Z(N)$ is preserved by any group automorphism on $N$, every inner automorphism of $G$ (which induces an automorphism on the normal subgroup $N$) also preserves $Z(N)$.
Therefore we have $Z(N) \trianglelefteq G$.
Now the claim follows from Lemma \ref{lem:with_normal_having_condition_C}, as the Abelian group $Z(N)$ satisfies the assumption in Lemma \ref{lem:with_normal_having_condition_C} that all elements of $Z(N)$ having the same order as $\sigma$ commute with each other.
\end{proof}

On the other hand, we have the following result about reductions to the cases of normal subgroups.

\begin{proposition}
\label{prop:reduction_to_normal_subgroup}
Let $(\sigma; (\mu_1,\rho_1),\dots,(\mu_L,\rho_L))$ be a normalized type of a compression function over $G$.
Let $N$ be a normal subgroup of $G$ satisfying that $\sigma \in N$ and $G = N Z_G(\sigma)$ where $Z_G(\sigma)$ denotes the centralizer of $\sigma$ in $G$.
If there exists a compression function $F$ of type $(\sigma; (\mu_1,\rho_1),\dots,(\mu_L,\rho_L))$, size $\ell$, and exponent $(e_1,\dots,e_{\ell})$ over $G$, then we have $\rho_j \in N$ for every $j$, and there exists a compression function of type $(\sigma; (\mu_1,\rho_1),\dots,(\mu_L,\rho_L))$, size $\ell$, and exponent $(e_1,\dots,e_{\ell})$ over $N$.
\end{proposition}
\begin{proof}
Let $(\tau_1,\dots,\tau_{\ell})$ be a solution of the system of equations \eqref{eq:lem:restating_with_conjugate_elements:equation} over $G$ as in Lemma \ref{lem:restating_with_conjugate_elements} implied by the existence of $F$.
For each $i$, write $\tau_i = u_i \sigma u_i{}^{-1}$ with $u_i \in G$.
Now by the assumption that $N \trianglelefteq G$ and $G = N Z_G(\sigma)$, we can write $u_i$ as $u_i = h_i z_i$ with $h_i \in N$ and $z_i \in Z_G(\sigma)$.
Then we have $\tau_i = h_i z_i \sigma z_i{}^{-1} h_i{}^{-1} = h_i \sigma h_i{}^{-1}$, that is, $\tau_i$ is an element of $N$ conjugate to $\sigma$ in $N$.
This implies that $\rho_j \in N$ for every $j$ and the system of equations \eqref{eq:lem:restating_with_conjugate_elements:equation} over $N$ has a solution $(\tau_1,\dots,\tau_{\ell})$ satisfying the conjugacy condition \eqref{eq:lem:restating_with_conjugate_elements:conjugacy}.
Therefore the claim follows from Lemma \ref{lem:restating_with_conjugate_elements}.
\end{proof}

\section{Inexistence over Some Classes of Groups}
\label{sec:inexistence}

In this section, we show inexistence of compression functions over some classes of groups.
The following is a key lemma of the argument.

\begin{lemma}
\label{lem:inexistence_by_normal}
Let $(\sigma; (\mu_1,\rho_1),\dots,(\mu_L,\rho_L))$ be a normalized type of a compression function over $G$ satisfying the following condition (*):
\begin{quote}
\textbf{Condition (*)}\quad
There exist indices $j_1 \neq j_2$ with $\rho_{j_1} = \rho_{j_2}$ satisfying that $\rho_{j_1} = \sigma^c$ for some integer $c$ coprime to $\ord(\sigma)$ and one of the following two conditions holds:
\begin{enumerate}
\item
$\mu_{j_1} - \mu_{j_2}$ is coprime to $\ord(\sigma)$;
\item
$\mu_{j_1} - \mu_{j_2}$ divides $\mu_{j_1}$.
\end{enumerate}
\end{quote}
Let $H$ be a subgroup of $G$ and $N$ a normal subgroup of $H$.
Suppose that $\sigma \in H \setminus N$.
Let $\pi \colon H \to H/N$ be the natural projection.
Suppose moreover that the following two conditions are satisfied:
\begin{quote}
\textbf{Condition (C1)}\quad
Any element of $G$ conjugate to $\sigma$ belongs to $H$.
\end{quote} 
\begin{quote}
\textbf{Condition (C2)}\quad
If $\nu_1,\nu_2 \in H$ and $\ord(\nu_1) = \ord(\nu_2) = \ord(\sigma)$, then $\pi(\nu_1) \pi(\nu_2) = \pi(\nu_2) \pi(\nu_1)$.
\end{quote}
Then there does not exist a compression function $F$ of type $(\sigma; (\mu_1,\rho_1),\dots,(\mu_L,\rho_L))$ over $G$.
\end{lemma}
\begin{proof}
By the assumption that $\sigma \in H \setminus N$, we have $\pi(\sigma) \neq 1$.
Moreover, as $c$ is coprime to $\ord(\sigma)$, we have $\ord(\rho_{j_1}) = \ord(\sigma)$ and $\sigma \in \langle \sigma^c \rangle = \langle \rho_{j_1} \rangle$, therefore $\rho_{j_1}$ is also an element of $H \setminus N$, hence $\pi(\rho_{j_1}) \neq 1$.

Now assume for the contrary that such a compression function $F$ exists.
Let $(\tau_1,\dots,\tau_{\ell})$ be a solution of the system of equations \eqref{eq:lem:restating_with_conjugate_elements:equation} as in Lemma \ref{lem:restating_with_conjugate_elements} implied by the existence of $F$.
Then by condition (C1), the conjugacy condition \eqref{eq:lem:restating_with_conjugate_elements:conjugacy} implies that each $\tau_i$ also belongs to $H$ and satisfies that $\ord(\tau_i) = \ord(\sigma)$.
Therefore each $\rho_j$ belongs to $H$ as well.
Now $(\pi(\tau_1),\dots,\pi(\tau_{\ell}))$ is a solution of the system of equations \eqref{eq:lem:restating_with_conjugate_elements:equation} over $H / N$ where $\pi(\rho_j)$ plays the role of $\rho_j$.
Moreover, as each $\tau_i$ satisfies that $\ord(\tau_i) = \ord(\sigma)$ as mentioned above, by condition (C2), it follows that all $\pi(\tau_1),\dots,\pi(\tau_{\ell})$ commute with each other.
Furthermore, if the type of $F$ satisfies the first condition in condition (*), then we have $\pi(\rho_{j_1}) = \pi(\rho_{j_2}) \neq 1$ and the value $\mu_{j_1} - \mu_{j_2}$ is not a multiple of $\ord(\pi(\rho_{j_1}))$ as $\ord(\pi(\rho_{j_1})) \neq 1$ is a divisor of $\ord(\rho_{j_1}) = \ord(\sigma)$.
Therefore $(\pi(\sigma); (\mu_1,\pi(\rho_1)),\dots,(\mu_L,\pi(\rho_L)))$ is a normalized type satisfying condition \eqref{eq:lem:commutative_solutions:type_condition}.
This contradicts Lemma \ref{lem:commutative_solutions}.
On the other hand, if the type of $F$ satisfies the second condition in condition (*), then we have $\pi(\rho_{j_1}) = \pi(\rho_{j_2}) \neq 1$ and the value $\mu_{j_1} - \mu_{j_2}$ divides $\mu_{j_1}$.
Therefore $(\pi(\sigma); (\mu_1,\pi(\rho_1)),\dots,(\mu_L,\pi(\rho_L)))$ is a normalized type satisfying condition \eqref{eq:lem:partly_commutative_solutions:type_condition}.
This contradicts Lemma \ref{lem:partly_commutative_solutions}.
Hence we have a contradiction in any case.
This completes the proof.
\end{proof}

By using this lemma, we show that under the condition (*), a compression function over a solvable group does not exist.

\begin{theorem}
\label{thm:solvable_group}
Suppose that $G$ is solvable.
Let $(\sigma; (\mu_1,\rho_1),\dots,(\mu_L,\rho_L))$ be a normalized type of a compression function over $G$ satisfying condition (*) in Lemma \ref{lem:inexistence_by_normal}.
Then there does not exist a compression function of type $(\sigma; (\mu_1,\rho_1),\dots,(\mu_L,\rho_L))$ over $G$.
\end{theorem}
\begin{proof}
Let $G = G^{(0)} > G^{(1)} > \cdots > G^{(n-1)} > G^{(n)} = \{1\}$ ($G^{(k)} = [G^{(k-1)},G^{(k-1)}]$) be the derived series of the solvable group $G$.
Note that it holds by induction on $k$ that each $G^{(k)}$ is normal in $G$.
As $\sigma \neq 1$, there exists an index $k < n$ satisfying that $\sigma \in G^{(k)} \setminus G^{(k+1)}$.
Put $H := G^{(k)}$ and $N := G^{(k+1)}$.
Then condition (C1) in Lemma \ref{lem:inexistence_by_normal} is satisfied as $G^{(k)} \trianglelefteq G$, and condition (C2) in Lemma \ref{lem:inexistence_by_normal} is satisfied as $G^{(k)} / G^{(k+1)}$ is Abelian.
Therefore all assumptions in Lemma \ref{lem:inexistence_by_normal} are satisfied.
Hence the claim follows from Lemma \ref{lem:inexistence_by_normal}.
\end{proof}

A typical situation for the condition (*) is that $\mu_2 = \mu_3 \pm 1$ and $\rho_2 = \rho_3 = \sigma$, including the type $(\sigma; (0,1), (1,\sigma), (2,\sigma))$ in Examples \ref{exmp:over_S_5} and \ref{exmp:over_A_5}.
Therefore by Theorem \ref{thm:solvable_group}, a compression function of such a type over a solvable group does not exist.

We also consider another situation as in Theorem \ref{thm:normal_series_starting_from_commutative_group} below.
Here we use the following lemma.

\begin{lemma}
\label{lem:elements_of_order_three_in_normal_series}
Let $p$ be a prime, and suppose that $G$ has a subnormal series $G = G_0 \rhd G_1 \rhd \cdots \rhd G_{n-1} \rhd G_n$ with $n \geq 1$ satisfying that $|G| / |G_n| \not\equiv 0 \pmod{p}$.
Then any element of order $p$ in $G$ is involved in $G_n$.
\end{lemma}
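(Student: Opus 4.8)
The plan is to argue by descending induction along the given chain, pushing the fixed order-three element down one term at a time until it lands in $G_n$. First I would set $G_0 := G$, so that the hypothesis reads $1 \lhd G_n \lhd \cdots \lhd G_1 \lhd G_0 = G$ with $G_{i+1} \lhd G_i$ for each $0 \leq i \leq n-1$. The initial observation to record is that the assumption $3 \nmid |G|/|G_n| = [G:G_n]$ forces $3 \nmid [G_i : G_{i+1}]$ for every $i$, because $[G:G_n] = [G_0:G_1][G_1:G_2]\cdots[G_{n-1}:G_n]$ by the tower law for indices; a prime absent from the product is absent from each factor.

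Next, let $\sigma \in G$ be any element of order three. The inductive claim I would prove is that $\sigma \in G_i$ implies $\sigma \in G_{i+1}$. For the inductive step I would pass to the natural projection $\pi \colon G_i \to G_i/G_{i+1}$, which is well defined since $G_{i+1} \lhd G_i$. The image $\pi(\sigma)$ has order dividing the order of $\sigma$, hence its order is $1$ or $3$; on the other hand, by Lagrange its order divides $|G_i/G_{i+1}| = [G_i:G_{i+1}]$, which is coprime to $3$ by the previous paragraph. The only consistent possibility is that $\pi(\sigma)$ has order one, i.e.\ $\sigma \in \ker\pi = G_{i+1}$, which is exactly the conclusion of the step. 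Starting from the trivial base case $\sigma \in G_0 = G$ and applying this step successively for $i = 0, 1, \dots, n-1$ then yields $\sigma \in G_n$, as desired.

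I do not expect a genuine obstacle here: the whole content is the distribution of the index hypothesis over the factors of the series, combined with the elementary fact that an order-three element must become trivial in any quotient whose order is coprime to three. The one point deserving a little care is the bookkeeping of normality, namely that it suffices for consecutive terms to form a subnormal chain so that each quotient $G_i/G_{i+1}$ is meaningful; in particular no normality of $G_n$ in $G$ itself is required, only $G_{i+1} \lhd G_i$ at each step, which is precisely what the hypothesis provides.
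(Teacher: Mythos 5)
Your proof is correct, but the key step is genuinely different from the paper's. Both arguments reduce the problem to pushing an order-three element down one link of the chain at a time (the paper phrases this as a recursion to the case $n=1$, you phrase it as descending induction with the index hypothesis distributed over the factors via the tower law $[G:G_n]=\prod_i [G_i:G_{i+1}]$). Where you diverge is in the single-step argument: the paper takes a Sylow $3$-subgroup $P$ of $G_1$, observes that the index hypothesis makes $P$ a Sylow $3$-subgroup of $G$, and then uses Sylow conjugacy together with $P \leq G_1 \lhd G$ to force every order-three element into $G_1$. You instead pass to the quotient $G_i/G_{i+1}$ and note that $\pi(\sigma)$ has order dividing both $3$ and $[G_i:G_{i+1}]$, hence is trivial by Lagrange, so $\sigma \in \ker\pi = G_{i+1}$. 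Your route is more elementary --- it needs only Lagrange's theorem and the fact that homomorphisms do not increase element orders, with no appeal to Sylow theory --- and, as you note, it uses only subnormality of consecutive terms, never normality in $G$; the paper's Sylow step does invoke $G_1 \lhd G$, though that is supplied by the hypothesis, so nothing is lost either way. Both proofs are complete and yield the same statement.
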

\begin{proof}
First note that $|G| / |G_n| = (|G| / |G_1|) \cdot (|G_1| / |G_n|) \not\equiv 0 \pmod{p}$ by the assumption, therefore we have $|G| / |G_1| \not\equiv 0 \pmod{p}$ and $|G_1| / |G_n| \not\equiv 0 \pmod{p}$.
Hence the claim for a general $n$ follows recursively from the claim for $n = 1$.
For the case $n = 1$, take a Sylow $p$-subgroup $P$ of $G_1$.
Then by the assumption that $|G| / |G_1| \not\equiv 0 \pmod{p}$, $P$ is also a Sylow $p$-subgroup of $G$.
Now for any $\nu \in G$ of order $p$, by Sylow's Theorem, $\nu$ is involved in some Sylow $p$-subgroup $P'$ of $G$, and $P'$ is conjugate to $P$.
As $P \leq G_1 \lhd G$, this implies that $P' \leq G_1$, therefore we have $\nu \in G_1$, as desired.
Hence the claim holds.
\end{proof}

\begin{theorem}
\label{thm:normal_series_starting_from_commutative_group}
Let $(\sigma; (\mu_1,\rho_1),\dots,(\mu_L,\rho_L))$ be a normalized type of a compression function over $G$ satisfying condition (*) in Lemma \ref{lem:inexistence_by_normal}.
Let $p$ be a prime and suppose that $\ord(\sigma) = p$.
Suppose moreover that $G$ has a subnormal series $G = G_0 \rhd G_1 \rhd \cdots \rhd G_{n-1} \rhd G_n$ with $n \geq 1$ satisfying that $|G| / |G_{n-1}| \not\equiv 0 \pmod{p}$, $|G_n| \not\equiv 0 \pmod{p}$, and all elements of $G_{n-1} / G_n$ of order $p$ commute with each other.
Then there does not exist a compression function of type $(\sigma; (\mu_1,\rho_1),\dots,(\mu_L,\rho_L))$ over $G$.
\end{theorem}
\begin{proof}
Put $H := G_{n-1}$ and $N := G_n$.
Let $\pi \colon G_{n-1} \to G_{n-1} / G_n$ be the natural projection.
By Lemma \ref{lem:elements_of_order_three_in_normal_series} applied to the subnormal series $G \rhd G_1 \rhd \cdots \rhd G_{n-1}$, it follows that any element of order $p$ in $G$ is involved in $G_{n-1}$.
In particular, we have $\sigma \in G_{n-1}$ and any element of $G$ conjugate to $\sigma$ is also an element of $G_{n-1}$.
Therefore condition (C1) in Lemma \ref{lem:inexistence_by_normal} is satisfied.
On the other hand, by the assumption that $|G_n| \not\equiv 0 \pmod{p}$, we have $\sigma \not\in G_n$.
Moreover, for any $\nu_1,\nu_2 \in G_{n-1}$ with $\ord(\nu_1) = \ord(\nu_2) = p$, as $p$ is prime, it holds that either at least one of $\pi(\nu_1)$ and $\pi(\nu_2)$ is a unit element, or $\ord(\pi(\nu_1)) = \ord(\pi(\nu_2)) = p$.
In any case, we have $\pi(\nu_1) \pi(\nu_2) = \pi(\nu_2) \pi(\nu_1)$ by the assumption that all elements of $G_{n-1} / G_n$ of order $p$ commute with each other.
Hence condition (C2) in Lemma \ref{lem:inexistence_by_normal} is satisfied.
Therefore all assumptions in Lemma \ref{lem:inexistence_by_normal} are satisfied.
Hence the claim follows from Lemma \ref{lem:inexistence_by_normal}.
\end{proof}

\section{Compressions over Symmetric and Alternating Groups}
\label{sec:over_symmetric_groups}

In this section, we focus on the following special case: $G$ is either a symmetric group $S_n$ or an alternating group $A_n$, and the type of a compression function is of the form $(\sigma; (0,1),(1,\sigma),(2,\sigma))$ where $\sigma \in G$ has order $3$.
See Examples \ref{exmp:over_S_5} and \ref{exmp:over_A_5} for such examples.
For this type, it follows from Theorem \ref{thm:solvable_group} that a compression function over a solvable group does not exist.
As any group of order less than $60$ is solvable, this implies that $A_5$ is the smallest possible underlying group for a compression function of this type.

When we only consider the underlying groups $G = S_5$ and $G = A_5$, any element in $G$ of order $3$ is a cyclic permutation $(a\ b\ c)$ of length $3$; in particular, we may assume by symmetry that $\sigma = (1\ 2\ 3)$.
Now we show that there is no advantage of searching for a compression function of this type over $S_5$ instead of $A_5$.

\begin{proposition}
\label{prop:construction_in_S_5_is_reduced_to_A_5}
Let $n \geq 5$ and $\sigma = (1\ 2\ 3) \in A_n$.
If there exists a compression function of type $(\sigma; (0,1),(1,\sigma),(2,\sigma))$, size $\ell$, and exponent $(e_1,\dots,e_{\ell})$ over $S_n$, then there exists a compression function of type $(\sigma; (0,1),(1,\sigma),(2,\sigma))$, size $\ell$, and exponent $(e_1,\dots,e_{\ell})$ over $A_n$.
\end{proposition}
\begin{proof}
As $A_n \lhd S_n$, $[S_n : A_n] = 2$, and $(4\ 5) \in Z_{S_n}(\sigma) \setminus A_n$, we have $S_n = A_n Z_{S_n}(\sigma)$.
Now the claim follows from Proposition \ref{prop:reduction_to_normal_subgroup} where $G := S_n$ and $N := A_n$.
\end{proof}

For the search for a compression function of this type over $A_5$, first we give the following lower bound for the size of such a compression function.

\begin{theorem}
\label{thm:short_case_over_S_5}
Let $\sigma = (1\ 2\ 3)$.
Then there does not exist a compression function of type $(\sigma; (0,1),(1,\sigma),(2,\sigma))$ and size at most $3$ over $A_5$.
\end{theorem}
\begin{proof}
The inexistence for the case of size at most $2$ follows immediately from Proposition \ref{prop:type_2_size_one}.
Assume for the contrary that such a compression function $F$ of this type, size $3$, and exponent $(e_1,e_2,e_3)$ over $A_5$ exists.
As $\ord(\sigma) = 3$, we may assume without loss of generality that $e_1,e_2,e_3 \in \{1,2\}$ (note that if some $e_i$ is a multiple of $3$, then the situation is reduced to the case of size $2$).
Let $(\tau_1,\tau_2,\tau_3) \in (A_5)^3$ be a solution of the system of equations \eqref{eq:lem:restating_with_conjugate_elements:equation} as in Lemma \ref{lem:restating_with_conjugate_elements} implied by the existence of $F$.
Then we have $\tau_1{}^{e_1} \tau_2{}^{e_2} \tau_3{}^{e_3} = \sigma = \tau_1{}^{2e_1} \tau_2{}^{2e_2} \tau_3{}^{2e_3}$, therefore $\tau_2{}^{-e_2} \tau_1{}^{e_1} \tau_2{}^{2e_2} = \tau_3{}^{-e_3}$, or equivalently
\begin{equation}
\label{eq:thm:short_case_over_S_5}
\tau_2{}^{-e_2} \tau_1{}^{e_1} \tau_2{}^{e_2} = \tau_3{}^{-e_3} \tau_2{}^{-e_2} \enspace.
\end{equation}
By the conjugacy condition \eqref{eq:lem:restating_with_conjugate_elements:conjugacy}, $\tau_1{}^{e_1}$ is conjugate to $\sigma^{e_1}$ which is a cyclic permutation of length $3$, therefore the left-hand side of \eqref{eq:thm:short_case_over_S_5}, which is conjugate to $\tau_1{}^{e_1}$, is also a cyclic permutation of length $3$.
Moreover, both $\nu_1 := \tau_3{}^{-e_3}$ and $\nu_2 := \tau_2{}^{-e_2}$ are also cyclic permutations of length $3$ by a similar reason.
As $\nu_1,\nu_2 \in A_5$, we may write $\nu_1 = (a\ b_1\ b_2)$ and $\nu_2 = (a\ c_1\ c_2)$ where $a, b_1, b_2$ are all different and $a, c_1, c_2$ are all different.

If $\{b_1,b_2\} \cap \{c_1,c_2\} \neq \emptyset$, then there is a subgroup $H$ of $S_5$ satisfying that $H \simeq S_4$ and $\nu_1,\nu_2 \in H$.
Now the equality \eqref{eq:thm:short_case_over_S_5} implies that $\tau_1{}^{e_1} = \nu_2{}^{-1} \nu_1 \nu_2{}^2 \in H$.
Therefore, as each $e_i$ is in $\{1,2\}$ and each $\tau_i$ is order $3$ by the conjugacy condition \eqref{eq:lem:restating_with_conjugate_elements:conjugacy}, we have $\tau_1 = (\tau_1{}^{e_1})^{e_1} \in H$, $\tau_2 = \nu_2{}^{-e_2} \in H$, and $\tau_3 = \nu_1{}^{-e_3} \in H$.
Therefore we have $\sigma = \tau_1{}^{e_1} \tau_2{}^{e_2} \tau_3{}^{e_3} \in H$.
Now as all of $\sigma,\tau_1,\tau_2,\tau_3$ are elements of $H \simeq S_4$ of order $3$, those elements are conjugate in $H$ to each other.
Therefore $(\tau_1,\tau_2,\tau_3)$ is a solution of the system of equations \eqref{eq:lem:restating_with_conjugate_elements:equation} over $H$ satisfying the conjugacy condition \eqref{eq:lem:restating_with_conjugate_elements:conjugacy}.
Hence by Lemma \ref{lem:restating_with_conjugate_elements}, there exists a compression function of type $(\sigma; (0,1),(1,\sigma),(2,\sigma))$ over the solvable group $H \simeq S_4$.
This contradicts Theorem \ref{thm:solvable_group}.

From now, we consider the other case where $\{b_1,b_2\} \cap \{c_1,c_2\} = \emptyset$.
Now $\nu_1 \nu_2$, which is the element \eqref{eq:thm:short_case_over_S_5}, has to be a cyclic permutation of length $3$ as mentioned above, while $\nu_1 \nu_2 (c_2) = \nu_1 (a) = b_1$, $\nu_1 \nu_2 (b_1) = \nu_1 (b_1) = b_2$, and $\nu_1 \nu_2 (b_2) = \nu_1 (b_2) = a \neq c_2$.
This is a contradiction.
Hence the claim holds.
\end{proof}

On the other hand, when we search for a compression function of this type and exponent $(e_1,\dots,e_{\ell})$, we may assume without loss of generality that each $e_i$ is in $\{1,2\}$ as $\ord(\sigma) = 3$.
Now we will show that we may moreover assume that each $e_i$ is $1$.
In the proof, the following known fact is essential; we give a proof of this fact for the sake of completeness.

\begin{lemma}
\label{lem:conjugate_in_A_n}
Let $n \geq 5$.
Then any two cyclic permutations of length $3$ are conjugate in $A_n$.
\end{lemma}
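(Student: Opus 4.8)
The plan is to show that the conjugacy class of a single $3$-cycle in $A_n$ (for $n \geq 5$) is \emph{all} of the $3$-cycles, which is equivalent to the stated claim since conjugacy is an equivalence relation. First I would recall the standard fact that in the full symmetric group $S_n$ any two $3$-cycles are conjugate, because conjugation by $g \in S_n$ sends the cycle $(a\ b\ c)$ to $(g(a)\ g(b)\ g(c))$, and any two ordered triples of distinct points can be matched by a permutation. The only gap is that such a conjugating $g$ need not lie in $A_n$; the hard part is precisely to arrange an \emph{even} conjugator.

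The key step is the following fixing-up trick. Given two $3$-cycles $\tau = (a\ b\ c)$ and $\tau' = (a'\ b'\ c')$, first choose any $g \in S_n$ with $g \tau g^{-1} = \tau'$. If $g \in A_n$ we are done. If $g$ is odd, I would multiply $g$ on the appropriate side by an even-order-preserving correction that fixes $\tau'$ (equivalently, commutes with $\tau'$). Here is where $n \geq 5$ is essential: since $\tau'$ moves only the three points $a', b', c'$, there are at least two points, say $d$ and $e$, fixed by $\tau'$. Then the transposition $(d\ e)$ commutes with $\tau'$, and $g' := g \cdot (d\ e)$ is even while still satisfying
\[
g' \tau g'^{-1} = g (d\ e) \tau (d\ e)^{-1} g^{-1} = g \tau g^{-1} = \tau',
\]
because $(d\ e)$ commutes with $\tau$ as well if $d,e$ are chosen among the points fixed by $\tau$; more cleanly, I would instead pick $d,e$ fixed by $\tau'$ and write $g' := (d\ e)\, g$, so that $g' \tau g'^{-1} = (d\ e) \tau' (d\ e)^{-1} = \tau'$ since $(d\ e)$ commutes with $\tau'$. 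Either way the corrected conjugator is even and still conjugates $\tau$ to $\tau'$.

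Thus every $3$-cycle is conjugate in $A_n$ to the fixed reference $3$-cycle $(1\ 2\ 3)$, and hence any two $3$-cycles are conjugate to each other in $A_n$. The main obstacle, as noted, is purely the parity adjustment of the conjugator; the hypothesis $n \geq 5$ guarantees the existence of two points outside the support of a $3$-cycle, which is exactly what makes the even correction available. (For $n = 4$ this fails, and indeed the two classes of $3$-cycles in $A_4$ are genuinely distinct, which is consistent with the restriction to $n \geq 5$.)
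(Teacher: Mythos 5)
Your argument is correct: the final form of your parity fix --- choosing two points $d,e$ outside the support of $\tau'$ (which exist precisely because $n\geq 5$) and replacing an odd conjugator $g$ by $g' := (d\ e)\,g$, which is even and still conjugates $\tau$ to $\tau'$ since $(d\ e)$ commutes with $\tau'$ --- is exactly right, and you were right to discard your first formulation $g' := g\,(d\ e)$, which would instead require $d,e$ to avoid the support of $\tau$. Your route differs from the paper's in how the parity correction is organized. The paper multiplies the odd conjugator by a transposition $(a\ b)$ taken \emph{inside} the support of $\tau=(a\ b\ c)$; this makes the conjugator even at the cost of landing on $\tau^2$ rather than $\tau$, so the paper needs a second step showing $\tau$ and $\tau^2$ are conjugate in $A_n$ (via $(1\ 2)(4\ 5)$, which is where its use of $n\geq 5$ enters). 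Your version uses a transposition \emph{disjoint} from the support, which commutes with the target and therefore fixes the parity in a single step with no detour through $\tau^2$; the hypothesis $n\geq 5$ enters instead as the existence of two fixed points. Your argument is the more streamlined of the two; the paper's two-step version has the minor side benefit of isolating the fact that a $3$-cycle is conjugate to its inverse in $A_n$, but nothing else in the paper relies on that intermediate statement.
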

\begin{proof}
Let $\rho$ and $\nu$ be cyclic permutations of length $3$ in $A_n$.
Then $\rho$ and $\nu$ are conjugate in $S_n$; say, $\nu = u \rho u^{-1}$ with $u \in S_n$.
Now as $n \geq 5$, there is a transposition $\tau = (a\ b) \in S_n$ with $\tau \rho = \rho \tau$.
Moreover, as $[S_n : A_n] = 2$ and $\tau \not\in A_n$, we have $u = v$ or $u = v \tau$ for some $v \in A_n$.
This implies that $\nu = u \rho u^{-1} = v \rho v^{-1}$ in any case.
Therefore the claim holds.
\end{proof}

\begin{proposition}
\label{prop:exponent_is_1}
Let $n \geq 5$, $\sigma = (1\ 2\ 3)$, $\ell \in \ZZp$, and $e_i \in \{1,2\}$ for $i = 1,\dots,\ell$.
If there exists a compression function $F$ of type $(\sigma; (0,1),(1,\sigma),(2,\sigma))$, size $\ell$, and exponent $(e_1,\dots,e_{\ell})$ over $A_n$, then there exists a compression function of type $(\sigma; (0,1),(1,\sigma),(2,\sigma))$, size $\ell$, and exponent $(1,1,\dots,1)$ over $A_n$.
\end{proposition}
\begin{proof}
Let $(\tau_1,\dots,\tau_{\ell})$ be a solution of the system of equations \eqref{eq:lem:restating_with_conjugate_elements:equation} over $A_n$ as in Lemma \ref{lem:restating_with_conjugate_elements} implied by the existence of $F$.
Then the conjugacy condition \eqref{eq:lem:restating_with_conjugate_elements:conjugacy} implies that each $\tau_i$ is a cyclic permutation of length $3$, therefore $\tau_i{}^{e_i}$ is also a cyclic permutation of length $3$.
Hence by Lemma \ref{lem:conjugate_in_A_n}, $\tau_i{}^{e_i}$ is conjugate to $\sigma$ in $A_n$.
This implies that $(\tau_1{}^{e_1},\dots,\tau_{\ell}{}^{e_{\ell}})$ is a solution of the system of equations \eqref{eq:lem:restating_with_conjugate_elements:equation} over $A_n$ corresponding to the exponent $(1,1,\dots,1)$, satisfying the conjugacy condition \eqref{eq:lem:restating_with_conjugate_elements:conjugacy}.
Therefore the claim follows from Lemma \ref{lem:restating_with_conjugate_elements}.
\end{proof}

Based on the arguments above, we performed a computer search for a compression function of type $(\sigma; (0,1),(1,\sigma),(2,\sigma))$ over $G = A_5$ where $\sigma = (1\ 2\ 3)$.
We set the size of a compression function to be $\ell := 4$, which is the smallest possible value due to Theorem \ref{thm:short_case_over_S_5}.
We focused on the specific exponent $(1,1,1,1)$ owing to Proposition \ref{prop:exponent_is_1}.
Then based on Lemma \ref{lem:restating_with_conjugate_elements}, we searched for a solution $(\tau_1,\tau_2,\tau_3,\tau_4) \in G^4$ of the following system of equations:
\[
\begin{cases}
y_1 y_2 y_3 y_4 = \sigma \enspace,\\
y_1{}^2 y_2{}^2 y_3{}^2 y_4{}^2 = \sigma \enspace.
\end{cases}
\]
Now due to the conjugacy condition \eqref{eq:lem:restating_with_conjugate_elements:conjugacy}, it suffices to search for a solution with the property that each $\tau_i$ is a cyclic permutation of length $3$.
On the other hand, if a solution with each $\tau_i$ being a cyclic permutation of length $3$ is found, then the conjugacy condition \eqref{eq:lem:restating_with_conjugate_elements:conjugacy} is automatically satisfied owing to Lemma \ref{lem:conjugate_in_A_n}.
These properties made the search significantly easier.
Then by a computer search using SageMath, we found a solution $(\tau_1,\tau_2,\tau_3,\tau_4)$ as in Example \ref{exmp:over_A_5_equivalent}, which corresponds to the compression function $F$ as in Example \ref{exmp:over_A_5}.
By the argument above, this is a smallest example of a compression function with the desired property.

\section{Applications to Homomorphic Encryption}
\label{sec:homomorphic_encryption}

In this section, we describe possible applications of compression functions to homomorphic encryption.
First we clarify the definition of homomorphic encryption adopted in this paper; it is slightly more strict than the usual sense since, e.g., here we suppose that the encryption/decryption and homomorphic evaluation in such a scheme must be error-free.

\begin{definition}
\label{defn:HE}
Let $\MM$ be a fixed finite set, and let $\FF$ be a fixed set of operations on $\MM$.
An \emph{$\FF$-homomorphic encryption} ($\FF$-HE) \emph{scheme} with plaintext space $\MM$ is defined to be a tuple $\Pi = (\Gen,\Enc,\Dec,\Eval)$ of (possibly probabilistic) algorithms $\Gen$, $\Enc$, $\Dec$, and $\Eval$ satisfying the following syntax, where each of the four algorithms is required to be of polynomial-time with respect to the security parameter $\lambda$:
\begin{description}
\item[$\Gen(1^{\lambda})$]
Given input $1^{\lambda}$, the key generation algorithm $\Gen$ outputs a tuple of public key $\pk$, secret key $\sk$, and evaluation key $\ek$.
Here $\pk$ involves information on the ciphertext space $\CC$, which is the disjoint union $\CC = \bigsqcup_{m \in \MM} \CC_m$ of subsets $\CC_m$ for $m \in \MM$.
Moreover, we suppose that each of $\sk$ and $\ek$ implicitly involves information on $\pk$. 
\item[$\Enc(\pk,m)$]
Given input $\pk$ and $m \in \MM$, the encryption algorithm $\Enc$ outputs a ciphertext $c \in \CC$. 
\item[$\Dec(\sk,c)$]
Given input $\sk$ and $c \in \CC$, the decryption algorithm $\Dec$ outputs an element of $\MM$.
\item[$\Eval(\ek,f,c_1,\dots,c_n)$]
Given input $\ek$, $f \in \FF$, and $c_1,\dots,c_n \in \CC$, where $f$ is an $n$-ary operation, the evaluation algorithm $\Eval$ outputs an element of $\CC$.
\end{description}
We say that an $\FF$-HE scheme $\Pi$ is \emph{correct} if the following two conditions are satisfied, where $(\pk,\sk,\ek)$ is any output of $\Gen(1^{\lambda})$:
\begin{description}
\item[(Correctness of Encryption/Decryption)]
With probability $1$, we have $\Enc(\pk,m) \in \CC_m$ for any $m \in \MM$, and we have $\Dec(\sk,c) = m$ for any $m \in \MM$ and $c \in \CC_m$.
\item[(Correctness of Evaluation)]
If $f \in \FF$ is an $n$-ary operation, and $m_i \in \MM$ and $c_i \in \CC_{m_i}$ for each $i = 1,\dots,n$, then with probability $1$, we have $\Eval(\ek,f,c_1,\dots,c_n) \in \CC_m$ where $m := f(m_1,\dots,m_n)$.
\end{description}
\end{definition}

In this section, we suppose that any $\FF$-HE scheme under consideration is correct (in the sense above) and secure (in the sense of IND-CPA security; see e.g., \cite{Katz-Lindell} for the definition).

\begin{definition}
\label{defn:FHE}
In the setting of Definition \ref{defn:HE}, we suppose moreover that $\MM = \{0,1\}$.
We say that an $\FF$-HE scheme $\Pi$ is a \emph{fully homomorphic encryption} (FHE) \emph{scheme} with plaintext space $\MM = \{0,1\}$ if $\FF$ is a functionally complete set of operations, that is, any operation on $\MM$ can be realized by a combination of operations in $\FF$.
\end{definition}

It is known that any set $\FF$ of bit operations involving operations $\mathsf{NOT}$ and $\mathsf{OR}$ is functionally complete, and any such set $\FF$ involving operation $\mathsf{NAND}$ is also functionally complete.

Based on the compression function in Example \ref{exmp:over_A_5}, we give the following reduction of a construction of an FHE scheme to a construction of a $\{\cdot_{A_5}\}$-HE scheme where $\cdot_{A_5}$ denotes the multiplication operation on $A_5$.
Our construction of the evaluation algorithm is significantly more efficient than a construction described implicitly in \cite{OstSke08}, and is also more efficient than a similar construction in \cite{Nui21}.
In the following, we write $c_1 \boxdot_{A_5} c_2$ as an abbreviation of $\Eval(\ek,\cdot_{A_5},c_1,c_2)$ for the sake of simplicity, and let the operation $\boxdot_{A_5}$ be left-associative; e.g., $c_1 \boxdot_{A_5} c_2 \boxdot_{A_5} c_3$ means $(c_1 \boxdot_{A_5} c_2) \boxdot_{A_5} c_3$.
Moreover, let $\mathsf{EQ}$ be the $2$-bit equality operation, i.e., $\mathsf{EQ}(b,b') = 1$ if $b = b'$ and $\mathsf{EQ}(b,b') = 0$ if $b \neq b'$; and let $3\mbox{-}\mathsf{NEQ}$ be the $3$-bit non-equality operation, i.e., $3\mbox{-}\mathsf{NEQ}(b,b',b'') = 0$ if $b = b' = b''$ and $3\mbox{-}\mathsf{NEQ}(b,b',b'') = 1$ otherwise.

\begin{theorem}
\label{thm:FHE_from_A_5-HE}
Let $\Pi = (\Gen,\Enc,\Dec,\Eval)$ be any $\{\cdot_{A_5}\}$-HE scheme with plaintext space $\MM = A_5$ and ciphertext space $\CC = \bigsqcup_{m \in A_5} \CC_m$.
Then the following scheme $\widetilde{\Pi} = (\widetilde{\Gen},\widetilde{\Enc},\widetilde{\Dec},\widetilde{\Eval})$ is an $\FF$-HE scheme with plaintext space $\widetilde{\MM} := \{0,1\}$ where $\FF = \{\mathsf{NOT},\mathsf{OR},\mathsf{NAND},\mathsf{XOR},\mathsf{EQ},3\mbox{-}\mathsf{NEQ}\}$, hence $\widetilde{\Pi}$ is an FHE scheme:
\begin{description}
\item[$\widetilde{\Gen}(1^{\lambda})$]
It generates $(\pk,\sk,\ek) \leftarrow \Gen(1^{\lambda})$, generates $\widehat{c}_{\tau} \leftarrow \Enc(\pk,\tau)$ for each $\tau \in S$ where
\[
S := \{(1\ 2\ 3), (1\ 2\ 4\ 3\ 5), (1\ 3\ 5), (1\ 4\ 3), (1\ 5)(2\ 3), (1\ 4\ 3\ 5\ 2)\} \enspace,
\]
and outputs $\widetilde{\pk} := \pk$, $\widetilde{\sk} := \sk$, and $\widetilde{\ek} := (\ek, (\widehat{c}_{\tau})_{\tau \in S})$.
The ciphertext space of $\widetilde{\Pi}$ is $\widetilde{\CC} = \widetilde{\CC}_0 \sqcup \widetilde{\CC}_1$ where $\widetilde{\CC}_0 := \CC_1$ and $\widetilde{\CC}_1 := \CC_{\sigma}$ with $\sigma := (1\ 2\ 3) \in A_5$.
\item[$\widetilde{\Enc}(\widetilde{\pk},m)$]
If $m = 0$, then it generates $c \leftarrow \Enc(\pk,1)$; and if $m = 1$, then it generates $c \leftarrow \Enc(\pk,\sigma)$.
Then it outputs $\widetilde{c} := c$.
\item[$\widetilde{\Dec}(\widetilde{\sk},\widetilde{c})$]
It generates $m' \leftarrow \Dec(\sk,\widetilde{c})$, and outputs $0$ if $m' = 1$ and outputs $1$ if $m' = \sigma$.
\item[$\widetilde{\Eval}(\widetilde{\ek},\mathsf{NOT},\widetilde{c}_1)$]
It computes $c := \widehat{c}_{\sigma} \boxdot_{A_5} \widetilde{c}_1 \boxdot_{A_5} \widetilde{c}_1$ and outputs $c$.
\item[$\widetilde{\Eval}(\widetilde{\ek},f,\widetilde{c}_1,\dots,\widetilde{c}_n)$ for $f \in \{\mathsf{OR},\mathsf{NAND},\mathsf{XOR},\mathsf{EQ},3\mbox{-}\mathsf{NEQ}\}$]
We set $n := 2$ if $f \neq 3\mbox{-}\mathsf{NEQ}$ and $n := 3$ if $f = 3\mbox{-}\mathsf{NEQ}$.
First, it computes $c_{\mathsf{in}}$ as follows:
\[
c_{\mathsf{in}} \leftarrow
\begin{cases}
\widetilde{c}_1 \boxdot_{A_5} \widetilde{c}_2 & \mbox{if $f = \mathsf{OR}$} \enspace,\\
\widehat{c}_{\sigma} \boxdot_{A_5} \widetilde{c}_1 \boxdot_{A_5} \widetilde{c}_2 & \mbox{if $f = \mathsf{NAND}$} \enspace,\\
\widetilde{c}_1 \boxdot_{A_5} \widetilde{c}_1 \boxdot_{A_5} \widetilde{c}_2 & \mbox{if $f = \mathsf{XOR}$} \enspace,\\
\widehat{c}_{\sigma} \boxdot_{A_5} \widehat{c}_{\sigma} \boxdot_{A_5} \widetilde{c}_1 \boxdot_{A_5} \widetilde{c}_2 & \mbox{if $f = \mathsf{EQ}$} \enspace,\\
\widetilde{c}_1 \boxdot_{A_5} \widetilde{c}_2 \boxdot_{A_5} \widetilde{c}_3 & \mbox{if $f = 3\mbox{-}\mathsf{NEQ}$} \enspace.
\end{cases}
\]
Secondly, it computes
\[
\begin{split}
c \leftarrow{}& \widehat{c}_{(1\ 2\ 4\ 3\ 5)} \boxdot_{A_5} c_{\mathsf{in}} \boxdot_{A_5} \widehat{c}_{(1\ 3\ 5)} \boxdot_{A_5} c_{\mathsf{in}} \boxdot_{A_5} \widehat{c}_{(1\ 4\ 3)} \\
&\quad \boxdot_{A_5} c_{\mathsf{in}} \boxdot_{A_5} \widehat{c}_{(1\ 5)(2\ 3)} \boxdot_{A_5} c_{\mathsf{in}} \boxdot_{A_5} \widehat{c}_{(1\ 4\ 3\ 5\ 2)} \enspace.
\end{split}
\]
Then it outputs $c$.
\end{description}
\end{theorem}
\begin{proof}
The correctness of encryption/decryption for $\widetilde{\Pi}$ and the security for $\widetilde{\Pi}$ follow immediately from the corresponding properties of $\Pi$.
Therefore it suffices to show the correctness of evaluation for $\widetilde{\Pi}$.

In the following, let $\PT(\gamma)$ denote the plaintext of $\gamma$ as a ciphertext in $\Pi$ (hence $\PT(\gamma) \in A_5$), and let $\widetilde{\PT}(\gamma)$ denote the plaintext of $\gamma$ as a ciphertext in $\widetilde{\Pi}$ (hence $\widetilde{\PT}(\gamma) \in \{0,1\}$).
First, for the case of $f = \mathsf{NOT}$, if $\widetilde{\PT}(\widetilde{c}_1) = 0$, then $\PT(\widetilde{c}_1) = 1$, therefore the output ciphertext $c$ satisfies that $\PT(c) = \sigma \cdot 1 \cdot 1 = \sigma$ and $\widetilde{\PT}(c) = 1$.
On the other hand, if $\widetilde{\PT}(\widetilde{c}_1) = 1$, then $\PT(\widetilde{c}_1) = \sigma$, therefore the output ciphertext $c$ satisfies that $\PT(c) = \sigma \cdot \sigma \cdot \sigma = 1$ and $\widetilde{\PT}(c) = 0$.
Hence we have $\widetilde{\PT}(c) = \mathsf{NOT}(\widetilde{\PT}(\widetilde{c}_1))$ in any case, as desired.

From now, we consider the other case of $f \in \{\mathsf{OR},\mathsf{NAND},\mathsf{XOR},\mathsf{EQ},3\mbox{-}\mathsf{NEQ}\}$.
Put $m_i := \PT(\widetilde{c}_i)$ and $\widetilde{m}_i := \widetilde{\PT}(\widetilde{c}_i)$ for each $i = 1,\dots,n$.
First, we show that the intermediate ciphertext $c_{\mathsf{in}}$ satisfies that $\PT(c_{\mathsf{in}}) = 1$ if $f(\widetilde{m}_1,\dots,\widetilde{m}_n) = 0$ and $\PT(c_{\mathsf{in}}) \in \{\sigma,\sigma^2\}$ if $f(\widetilde{m}_1,\dots,\widetilde{m}_n) = 1$.
Indeed:
\begin{itemize}
\item
When $f = \mathsf{OR}$:
\begin{itemize}
\item
If $(\widetilde{m}_1,\widetilde{m}_2) = (0,0)$, then $f(\widetilde{m}_1,\widetilde{m}_2) = 0$, while $(m_1,m_2) = (1,1)$ and $\PT(c_{\mathsf{in}}) = m_1 \cdot m_2 = 1$.
\item
If $(\widetilde{m}_1,\widetilde{m}_2) \in \{(1,0),(0,1)\}$, then $f(\widetilde{m}_1,\widetilde{m}_2) = 1$, while $(m_1,m_2) \in \{(\sigma,1),(1,\sigma)\}$ and $\PT(c_{\mathsf{in}}) = m_1 \cdot m_2 = \sigma$.
\item
If $(\widetilde{m}_1,\widetilde{m}_2) = (1,1)$, then $f(\widetilde{m}_1,\widetilde{m}_2) = 1$, while $(m_1,m_2) = (\sigma,\sigma)$ and $\PT(c_{\mathsf{in}}) = m_1 \cdot m_2 = \sigma^2$.
\end{itemize}
\item
When $f = \mathsf{NAND}$:
\begin{itemize}
\item
If $(\widetilde{m}_1,\widetilde{m}_2) = (0,0)$, then $f(\widetilde{m}_1,\widetilde{m}_2) = 1$, while $(m_1,m_2) = (1,1)$ and $\PT(c_{\mathsf{in}}) = \sigma \cdot m_1 \cdot m_2 = \sigma$.
\item
If $(\widetilde{m}_1,\widetilde{m}_2) \in \{(1,0),(0,1)\}$, then $f(\widetilde{m}_1,\widetilde{m}_2) = 1$, while $(m_1,m_2) \in \{(\sigma,1),(1,\sigma)\}$ and $\PT(c_{\mathsf{in}}) = \sigma \cdot m_1 \cdot m_2 = \sigma^2$.
\item
If $(\widetilde{m}_1,\widetilde{m}_2) = (1,1)$, then $f(\widetilde{m}_1,\widetilde{m}_2) = 0$, while $(m_1,m_2) = (\sigma,\sigma)$ and $\PT(c_{\mathsf{in}}) = \sigma \cdot m_1 \cdot m_2 = 1$.
\end{itemize}
\item
When $f = \mathsf{XOR}$:
\begin{itemize}
\item
If $(\widetilde{m}_1,\widetilde{m}_2) = (0,0)$, then $f(\widetilde{m}_1,\widetilde{m}_2) = 0$, while $(m_1,m_2) = (1,1)$ and $\PT(c_{\mathsf{in}}) = m_1 \cdot m_1 \cdot m_2 = 1$.
\item
If $(\widetilde{m}_1,\widetilde{m}_2) = (1,0)$, then $f(\widetilde{m}_1,\widetilde{m}_2) = 1$, while $(m_1,m_2) = (\sigma,1)$ and $\PT(c_{\mathsf{in}}) = m_1 \cdot m_1 \cdot m_2 = \sigma^2$.
\item
If $(\widetilde{m}_1,\widetilde{m}_2) = (0,1)$, then $f(\widetilde{m}_1,\widetilde{m}_2) = 1$, while $(m_1,m_2) = (1,\sigma)$ and $\PT(c_{\mathsf{in}}) = m_1 \cdot m_1 \cdot m_2 = \sigma$.
\item
If $(\widetilde{m}_1,\widetilde{m}_2) = (1,1)$, then $f(\widetilde{m}_1,\widetilde{m}_2) = 0$, while $(m_1,m_2) = (\sigma,\sigma)$ and $\PT(c_{\mathsf{in}}) = m_1 \cdot m_1 \cdot m_2 = 1$.
\end{itemize}
\item
When $f = \mathsf{EQ}$:
\begin{itemize}
\item
If $(\widetilde{m}_1,\widetilde{m}_2) = (0,0)$, then $f(\widetilde{m}_1,\widetilde{m}_2) = 1$, while $(m_1,m_2) = (1,1)$ and $\PT(c_{\mathsf{in}}) = \sigma \cdot \sigma \cdot m_1 \cdot m_2 = \sigma^2$.
\item
If $(\widetilde{m}_1,\widetilde{m}_2) \in \{(1,0),(0,1)\}$, then $f(\widetilde{m}_1,\widetilde{m}_2) = 0$, while $(m_1,m_2) \in \{(\sigma,1),(1,\sigma)\}$ and $\PT(c_{\mathsf{in}}) = \sigma \cdot \sigma \cdot m_1 \cdot m_2 = 1$.
\item
If $(\widetilde{m}_1,\widetilde{m}_2) = (1,1)$, then $f(\widetilde{m}_1,\widetilde{m}_2) = 1$, while $(m_1,m_2) = (\sigma,\sigma)$ and $\PT(c_{\mathsf{in}}) = \sigma \cdot \sigma \cdot m_1 \cdot m_2 = \sigma$.
\end{itemize}
\item
When $f = 3\mbox{-}\mathsf{NEQ}$:
\begin{itemize}
\item
If $(\widetilde{m}_1,\widetilde{m}_2,\widetilde{m}_3) = (0,0,0)$, then $f(\widetilde{m}_1,\widetilde{m}_2,\widetilde{m}_3) = 0$, while $(m_1,m_2,m_3) = (1,1,1)$ and $\PT(c_{\mathsf{in}}) = m_1 \cdot m_2 \cdot m_3 = 1$.
\item
If $(\widetilde{m}_1,\widetilde{m}_2,\widetilde{m}_3) \in \{(1,0,0),(0,1,0),(0,0,1)\}$, then $f(\widetilde{m}_1,\widetilde{m}_2,\widetilde{m}_3) = 1$, while $(m_1,m_2,m_3) \in \{(\sigma,1,1),(1,\sigma,1),(1,1,\sigma)\}$ and $\PT(c_{\mathsf{in}}) = m_1 \cdot m_2 \cdot m_3 = \sigma$.
\item
If $(\widetilde{m}_1,\widetilde{m}_2,\widetilde{m}_3) \in \{(1,1,0),(1,0,1),(0,1,1)\}$, then $f(\widetilde{m}_1,\widetilde{m}_2,\widetilde{m}_3) = 1$, while $(m_1,m_2,m_3) \in \{(\sigma,\sigma,1),(\sigma,1,\sigma),(1,\sigma,\sigma)\}$ and $\PT(c_{\mathsf{in}}) = m_1 \cdot m_2 \cdot m_3 = \sigma^2$.
\item
If $(\widetilde{m}_1,\widetilde{m}_2,\widetilde{m}_3) = (1,1,1)$, then $f(\widetilde{m}_1,\widetilde{m}_2,\widetilde{m}_3) = 0$, while $(m_1,m_2,m_3) = (\sigma,\sigma,\sigma)$ and $\PT(c_{\mathsf{in}}) = m_1 \cdot m_2 \cdot m_3 = 1$.
\end{itemize}
\end{itemize}
Secondly, by Example \ref{exmp:over_A_5}, the output ciphertext $c$ satisfies that $\PT(c) = 1$ if $\PT(c_{\mathsf{in}}) = 1$ and $\PT(c) = \sigma$ if $\PT(c_{\mathsf{in}}) \in \{\sigma,\sigma^2\}$.
Summarizing, if $f(\widetilde{m}_1,\dots,\widetilde{m}_n) = 0$, then we have $\PT(c) = 1$ and therefore $\widetilde{\PT}(c) = 0$; and if $f(\widetilde{m}_1,\dots,\widetilde{m}_n) = 1$, then we have $\PT(c) = \sigma$ and therefore $\widetilde{\PT}(c) = 1$.
This implies the correctness of evaluation for $\widetilde{\Pi}$.
This completes the proof.
\end{proof}

For an underlying $\{\cdot_{A_5}\}$-HE scheme as in Theorem \ref{thm:FHE_from_A_5-HE}, a candidate construction of an HE scheme with plaintext space being any finite group was proposed in \cite{GriPon04}, but it was shown in \cite{CBW07} that the scheme is insecure.
To the author's best knowledge, a construction of a $\{\cdot_{A_5}\}$-HE scheme with plausible security has not been given in the literature and is left as an open problem.



\begin{thebibliography}{99}


\bibitem{BST90}
D.~A.~M.~Barrington, H.~Straubing, D.~Th\'{e}rien,
Non-Uniform Automata over Groups,
Information and Computation, vol.89, no.2, pp.109--132, 1990

\bibitem{Bonte+22}
C.~Bonte, I.~Iliashenko, J.~Park, H.~V.~L.~Pereira, N.~P.~Smart,
FINAL: Faster FHE Instantiated with NTRU and LWE,
in: Proceedings of ASIACRYPT 2022 (Part II), pp.188--215, 2022

\bibitem{BraVai11}
Z.~Brakerski, V.~Vaikuntanathan,
Efficient Fully Homomorphic Encryption from (Standard) LWE,
in: Proceedings of FOCS 2011, pp.97--106, 2011

\bibitem{Cheon+18}
J.~H.~Cheon, K.~Han, A.~Kim, M.~Kim, Y.~Song,
Bootstrapping for Approximate Homomorphic Encryption,
in: Proceedings of EUROCRYPT 2018 (Part I), pp.360--384, 2018

\bibitem{Chillotti+20}
I.~Chillotti, N.~Gama, M.~Georgieva, M.~Izabach\`{e}ne,
TFHE: Fast Fully Homomorphic Encryption Over the Torus,
Journal of Cryptology, vol.33, no.1, pp.34--91, 2020

\bibitem{CBW07}
S.-J.~Choi, S.~R.~Blackburn, P.~R.~Wild,
Cryptanalysis of a Homomorphic Public-Key Cryptosystem over a Finite Group,
Journal of Mathematical Cryptology, vol.1, pp.351--358, 2007

\bibitem{Dijk+10}
M.~van Dijk, C.~Gentry, S.~Halevi, V.~Vaikuntanathan,
Fully Homomorphic Encryption over the Integers,
in: Proceedings of EUROCRYPT 2010, pp.24--43, 2010

\bibitem{DucMic15}
L.~Ducas, D.~Micciancio,
FHEW: Bootstrapping Homomorphic Encryption in Less Than a Second,
in: Proceedings of EUROCRYPT 2015 (Part I), pp.617--640, 2015

\bibitem{Gen09}
C.~Gentry,
Fully Homomorphic Encryption Using Ideal Lattices,
in: Proceedings of STOC 2009, pp.169--178, 2009

\bibitem{GSW13}
C.~Gentry, A.~Sahai, B.~Waters,
Homomorphic Encryption from Learning with Errors: Conceptually-Simpler, Asymptotically-Faster, Attribute-Based,
in: Proceedings of CRYPTO 2013 (Part I), pp.75--92, 2013

\bibitem{GriPon04}
D.~Grigoriev, I.~Ponomarenko,
Homomorphic Public-Key Cryptosystems over Groups and Rings,
in: Complexity of Computations and Proofs (J.~Kraj\'{i}\v{c}ek, ed.), Quaderni di Matematica 13, Dept.\ of Mathematics, Seconda Universit\`{a} di Napoli, Caserta, 2004, pp.305--325

\bibitem{GriPon05}
D.~Grigoriev, I.~Ponomarenko,
On Non-Abelian Homomorphic Public-Key Cryptosystems,
Journal of Mathematical Sciences, vol.126, no.3, pp.1158--1166, 2005

\bibitem{Katz-Lindell}
J.~Katz, Y.~Lindell,
Introduction to Modern Cryptography, Third Edition,
CRC Press, 2021

\bibitem{Nui21}
K.~Nuida,
Towards Constructing Fully Homomorphic Encryption without Ciphertext Noise from Group Theory,
in: International Symposium on Mathematics, Quantum Theory, and Cryptography, Mathematics for Industry book series vol.33, Springer, pp.57--78, 2021

\bibitem{NuiKur15}
K.~Nuida, K.~Kurosawa,
(Batch) Fully Homomorphic Encryption over Integers for Non-Binary Message Spaces,
in: Proceedings of EUROCRYPT 2015 (Part I), pp.537--555, 2015

\bibitem{OstSke08}
R.~Ostrovsky, W.~E.~Skeith III,
Communication Complexity in Algebraic Two-Party Protocols,
in: Proceedings of CRYPTO 2008, pp.379--396, 2008


\end{thebibliography}
\end{document}